\let\phi\varphi
\newcommand{\IP}[2]{\left< #1 , #2 \right>}
\newcommand{\cH}{\mathcal{H}}
\newcommand{\cc}{\boldsymbol{c}}
\newcommand{\ol}[1]{\overline{#1}}
\newcommand{\go}{\mathring{g}}
\newcommand{\gp}{g^+}
\newcommand{\om}{\omega}
\newcommand{\Ric}{\operatorname{Ric}}
\newcommand{\R}{\ensuremath{\mathbb{R}}}
\newcommand{\rpl}                         
{\mbox{$
\begin{picture}(12.7,8)(-.5,-1)
\put(0,0.2){$+$}
\put(4.2,2.8){\oval(8,8)[r]}
\end{picture}$}}
\numberwithin{equation}{section}
\newcounter{theorem}
\newtheorem{thm}[theorem]{Theorem}
\newtheorem*{thm*}{Theorem }
\newtheorem{lemma}[theorem]{Lemma}
\newtheorem{prop}[theorem]{Proposition}
\newtheorem{cor}[theorem]{Corollary}
\newtheorem*{lemma*}{Lemma \thesubsection}
\newtheorem*{prop*}{Proposition}
\newtheorem*{cor*}{Corollary \thesubsection}
\newtheorem{problem}{Problem}
\theoremstyle{definition}
\newtheorem{definition}[theorem]{Definition}
\newtheorem*{definition*}{Definition \thesubsection}
\newtheorem{example}[theorem]{Example}
\newtheorem*{example*}{Example \thesubsection}
\theoremstyle{remark}
\newtheorem{remark}[theorem]{Remark}
\newtheorem*{remark*}{Remark \thesubsection}
\newenvironment{customthm}[1]
  {\innercustomthm}
  {\endinnercustomthm}
\newenvironment{customprop}[1]
  {\innercustomprop}
  {\endinnercustomprop}
\def\sideremark#1{\ifvmode\leavevmode\fi\vadjust{\vbox to0pt{\vss
 \hbox to 0pt{\hskip\hsize\hskip1em
 \vbox{\hsize2.5cm\tiny\raggedright\pretolerance10000
  \noindent #1\hfill}\hss}\vbox to8pt{\vfil}\vss}}}%
\begin{document}
\renewcommand{\today}{}
\title{CMC Foliations and their conformal aspects}

\author{A.\ Rod Gover and Valentina-Mira Wheeler}
\address{A.R.G.:Department of Mathematics\\
  The University of Auckland\\
  Private Bag 92019\\
  Auckland 1142\\
  New Zealand}
\email{r.gover@auckland.ac.nz}

\address{V.-M. Wheeler\\
School of mathematics and applied statistics \\
           University of Wollongong\\
           Northfields Avenue\\
           Wollongong, NSW, 2500\\
           Australia\\}
\email{vwheeler@uow.edu.au}

\begin{abstract}
  On a manifold we term a hypersurface foliation a slicing if it is
  the level set foliation of a slice function -- meaning some real
  valued function $f$ satisfying that $df$ is nowhere zero. On
  Riemannian manifolds we give a non-linear PDE on functions whose
  solutions are generic constant-mean-curvature (CMC) slice
  functions. Conversely, to any generic transversely-oriented
  constant-mean-curvature foliation the equation uniquely associates
  such a function. In one sense the equation is a scalar analogue of
  the Einstein equations.  Given any slicing we show that, locally,
  one can conformally prescribe any smooth mean curvature function. We
  use this to show that, locally on a Riemannian manifold, a slicing
  is CMC for a conformally related metric. These results admit global
  versions assuming certain restrictions.  Finally, given a
  conformally compact manifold we study the problem of normalising the
  defining function so that it is a CMC slice function for a
  compactifying metric. We show that two cases of this problem are
  formally solvable to all orders.
\end{abstract}

\maketitle

\subjclass{MSC2020: Primary 53C12, 53C18, 53A10;
  Secondary 58J90, 53B20, 53A05}

\section{Introduction}

 Recall on an $n$-manifold $M$ (here and throughout we take $n\geq 2$)
 a foliation of dimension $p$ is a rank $p$ distribution $\mathcal{H}$
 that is integrable
 \cite{frobenius1877ueber,lawson1974foliations}. Here we consider only
 the case of $p=n-1$ so that $\mathcal{H}$ is a {\em hyperplane
   distribution}, meaning that it is a corank 1 vector subbundle of
 the tangent bundle $TM$, and integrability means that for vector
 fields $X,Y\in \Gamma(\cH)$  the Lie bracket $[X,Y]$ also
 lies in $\Gamma(\cH)$. In this case each leaf is a {\em
   hypersurface}, meaning an embedded submanifold of codimension one.
 From the distribution $\cH$ we have an exact sequence
$$
0\to \cH\to TM \to Q\to 0 ,
$$ and so dually a rank 1 subbundle $Q^*$ of $T^*M$ consisting of
annihilators of $\cH$.
The foliation is
{\em transversely orientable} if $Q^*$ admits a global section, and the choice of
any such a section determines a transverse orientation \cite{moerdijk2003introduction}.

In terms of $Q^*$ the
condition of integrability is that any local section $\nu\in
\Gamma(Q^*)$ satisfies that its exterior derivative $d\nu$ lies in the
ideal of forms generated by $\nu$, equivalently $\nu\wedge d\nu=0$. This
means that on contractible open sets $U$ (so we will say {\em
  locally}) the leaf space is given by the level sets of a function
$f:U\to \R $ with $df\in \Gamma_U (Q^*)$ nowhere zero on $U$.
Since such functions always exist locally, and may exist globally, it will be useful to distinguish and discuss foliations given by such
functions.
\begin{definition}\label{slicing-def}
  On any dimension $n$ submanifold $N$ of $M$ we will term a
  hypersurface foliation a {\em slicing} if it is the level set
  foliation of some smooth function $f:N\to \R$ satisfying that $df$ is nowhere zero
  on $N$. Any such function will be called a {\em slice function}
    for the slicing.
\end{definition}
\noindent Given a slice function $f$, then $df$ assigns an orientation
to the slicing determined by $f$. Of course a closed manifold cannot
admit a global slicing. On the other hand, as is well known, closed
manifolds do admit Morse functions. On a closed manifold a Morse
function has a finite set of critical points, and so is a slice
function on the complement of those points. Similarly Morse-Bott
functions restrict to slice functions on an open dense subset.

Unless otherwise stated, all functions and structures are taken to be
smooth, that is $C^\infty$ on their domain.

From Section \ref{fol-sec} we work in the Riemannian setting  (and see there  for the notation used below) to find a
partial differential equation that characterises constant mean
curvature (CMC) foliations that are {\em generic} in the sense that
the mean curvature function $H_f$ has no critical points.
We obtain
the following result.

  \begin{thm}\label{cmc}
 A  foliation is  generic CMC iff it admits a  slice function $f$ satisfying
\begin{equation}\label{CMCeq}
  (n-1)\lambda f= \frac{1}{| df|} \Delta f- \frac{1}{|df|^3}\cdot (\nabla^a f) (\nabla^b f)(\nabla_a\nabla _b f) ,
\end{equation}
where $\lambda$ is either $+1$ or $-1$.
\end{thm}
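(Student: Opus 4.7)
My plan is to recast \eqref{CMCeq} as the pointwise identity $H_f = (n-1)\lambda f$, where $H_f$ denotes the mean curvature of the level sets of $f$ computed with respect to the transverse orientation induced by $df$. Once this identification is made, the equivalence follows from a short tautology together with some sign bookkeeping.

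The first step is a direct calculation: for a slice function $f$, the compatible unit normal is $\nu^a = \nabla^a f/|df|$, so the mean curvature is $H_f = \nabla_a\nu^a$. Expanding the divergence and using the identity $\nabla_a |df| = |df|^{-1}(\nabla^b f)(\nabla_a\nabla_b f)$, obtained by differentiating $|df|^2 = \nabla^b f\,\nabla_b f$, one finds that $H_f$ equals the right-hand side of \eqref{CMCeq}. Thus \eqref{CMCeq} is the equation $H_f=(n-1)\lambda f$.

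The equivalence is then straightforward. For the $(\Leftarrow)$ implication, if $f$ satisfies \eqref{CMCeq} then $H_f = (n-1)\lambda f$ is constant on each leaf of the slicing (since $f$ is), so the foliation is CMC; and $dH_f = (n-1)\lambda\,df$ is nowhere zero, so it is generic. For the $(\Rightarrow)$ implication, given a generic CMC foliation, choose a transverse orientation and let $H$ be the resulting mean curvature. Then $H$ is constant on leaves with $dH$ nowhere zero, so $H$ itself is a slice function for the foliation. Setting $\tilde f := \varepsilon H/(n-1)$ with the sign $\varepsilon\in\{\pm 1\}$ chosen so that $d\tilde f$ agrees with the given transverse orientation, a brief case analysis (using that reversing orientation flips the sign of the mean curvature) shows that $H_{\tilde f} = (n-1)\lambda \tilde f$ for a well-defined $\lambda\in\{\pm 1\}$. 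The value of $\lambda$ is not free: it is dictated by whether $H$ is increasing or decreasing along the chosen transverse orientation, and both signs genuinely occur (for example, concentric spheres in $\R^n\setminus\{0\}$ have $H = (n-1)/r$, which forces $\lambda = -1$). The only real point of care in the whole argument is this sign bookkeeping; nothing deeper is required beyond the mean curvature computation itself.
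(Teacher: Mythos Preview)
Your proof is correct and follows the same route as the paper: identify the right-hand side of \eqref{CMCeq} as $(n-1)H_f$ (note the paper's convention is $H=\tfrac{1}{n-1}\nabla_a\nu^a$, so in its notation the equation reads $H_f=\lambda f$ rather than your $H_f=(n-1)\lambda f$), deduce $\Leftarrow$ immediately, and for $\Rightarrow$ use the mean curvature itself as the slice function. The paper simply sets $f:=H$ without your rescaling by $1/(n-1)$ or the auxiliary sign $\varepsilon$, but this difference is purely cosmetic.
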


In Riemannian geometry the Einstein equations are a non-linear PDE system that
equate a non-linear rational function of the derivatives of the
metric components $g_{ij}$ to a constant multiple of these: $\Ric_g=\mu g$,
where $\Ric_g$ is the Ricci curvature of the metric $g$. The equation \ref{CMCeq}  is evidentally a scalar equation in the same spirit, it states
\begin{equation}\label{the-eq}
H_f= \lambda f ,
\end{equation}
where $H_f$ is the mean curvature function of the slicing $f$, and this is a non-linear rational function of the derivatives of $f$.

 Section \ref{conf-sec} moves into conformal aspects of the
 problem. Here, given a slicing, we consider changing the metric
 conformally, i.e. replacing $g$ with $\widehat{g}=e^{2\om} g$
 ($\om\in C^\infty(M)$), so that with respect to the new metric the
 slicing is CMC. We start the simpler problem of conformally finding a metric that makes it minimal (cf.\ \cite{rummler1979quelques,haefliger1980some}).
 \begin{prop}\label{min-prop}
  Given a slicing $f$ on a Riemannian manifold
    $(M,g)$, there is locally a conformally related metric
    $\widehat{g}=e^{2\om}g$ so that the slicing is minimal with
  respect to $\widehat{g}$.

  This extends to a global result if the manifold is contractible or it is  a
  product compatible with the slicing.
  \end{prop}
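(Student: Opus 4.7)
The plan is to reduce the minimality condition for the leaves of the slicing under $\widehat{g}$ to a first-order linear PDE in $\om$, which can then be solved by one-dimensional integration along transversals to the slicing. The first step is the conformal transformation law for the leaf mean curvature. With $\widehat{\nu}=e^{-\om}\nu$ the $\widehat{g}$-unit normal corresponding to $\nu=df/|df|$, a direct calculation (apply the Koszul formula for the conformally related connection to tangent vectors to a leaf, and trace against the induced inverse metric) yields, in the paper's normalisation,
\[
\widehat{H}_f \;=\; e^{-\om}\bigl(H_f-\nu(\om)\bigr),
\]
so the slicing is minimal with respect to $\widehat{g}$ if and only if
\begin{equation*}
\nu(\om)=H_f
\end{equation*}
pointwise on $M$. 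Since $\nu$ is nowhere zero and transverse to the leaves, this equation is non-characteristic with every leaf a valid initial hypersurface.

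For the local statement, I would choose a foliated coordinate chart $U\cong \Si\x(-\ep,\ep)$ around a given point in which $f$ depends only on the transverse coordinate. Along each transverse curve $\{y\}\x(-\ep,\ep)$ the PDE reduces to a scalar ODE in one real variable; integrating from $\Si\x\{0\}$ with any prescribed initial value (e.g.\ $\om\equiv 0$) produces a smooth solution on $U$, and $\widehat{g}=e^{2\om}g$ solves the local problem.

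For the global versions the aim is to upgrade this local foliated chart to a global product compatible with the slicing. When $M\cong L\x I$ is already such a product and $f$ is a function of the $I$-factor, the local integration extends immediately from $L\x\{t_0\}$ to all of $L\x I$, and the resulting $\om$ is smooth. When $M$ is contractible, $f\colon M\to f(M)\subset \R$ is a submersion onto a contractible open interval; after restricting, if necessary, to a connected component of a typical fibre, one argues that $f$ is a trivial fibration, whence the slicing is a global product and the previous case applies.

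\textbf{Main obstacle.} The delicate step is the global existence in the contractible case, where one must secure a product structure compatible with the slicing. Reparametrising the transverse vector field $\nu$ so that $f$ itself plays the role of the flow parameter bounds the parameter range intrinsically by $f(M)$, and triviality of submersions over a contractible base then supplies the required global structure; once this is in hand, the integration from the local step goes through uniformly in the transverse variable.
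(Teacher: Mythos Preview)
Your local argument and the product case match the paper's approach: reduce minimality to a transport equation along the unit normal and integrate. One sign slip: with the paper's conventions the conformal change gives $\widehat H_f=e^{-\om}\bigl(H_f+\nu(\om)\bigr)$, so the equation to solve is $\nu(\om)=-H_f$ rather than $\nu(\om)=H_f$; this does not affect the method.

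The real gap is your treatment of the contractible case. You assert that a slice function $f\colon M\to f(M)$ on a contractible $M$ is a trivial fibration, and then reduce to the product case. That inference fails in general. Take $M=\R^2\setminus\{(0,y):y\ge 0\}$, the plane with a closed half-line removed; this is contractible, and $f(x,y)=y$ is a slice function, yet the fibres over $y<0$ are connected while those over $y\ge 0$ have two components, so $f$ is not even locally trivial near $y=0$. The principle you invoke, ``triviality of submersions over a contractible base'', is really a statement about fiber \emph{bundles}; to apply it one must first know that $f$ is a bundle (e.g.\ via Ehresmann's theorem, which requires properness, or via a complete transverse flow), and that is exactly what is not available here. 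Your parenthetical ``after restricting, if necessary, to a connected component of a typical fibre'' does not rescue this, since restricting $M$ changes the problem.

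The paper takes a different route for the global statements and never tries to manufacture a global product in the contractible case. It sets up explicit flow-adapted coordinates $(t,x^1,\dots,x^{n-1})$ with $t=f$, writes the solution on a local cylinder as $\om=-\int_0^t H^g\sqrt{g_{ss}}\,ds$, and then patches: any two local solutions on overlapping cylinders differ by a function annihilated by $\nu_t$, and this discrepancy is forced to zero by requiring agreement on a shared leaf. One then propagates from leaf to leaf, with the contractibility (or product) hypothesis used only to ensure this propagation meets no obstruction.
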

\noindent A slicing determines, in an obvious way, a diffeomorphism
from collars around the slicing leaves to products of a typical leaf
with an interval in $\mathbb{R}$, see the proof of this proposition in Section \ref{conf-sec}. This may extend to give a product
structure on the entire manifold and that is what we mean by the last
part of the proposition statement. Prescription of minimal curvature via general metric changes has been studied in many places, see e.g. \cite{rummler1979quelques,haefliger1980some}.
Here we focus on what can be achieved by simple conformal theory.

The result above generalises, as follows.

\begin{thm}\label{conf-presc}
 Let $f$ be a slice function on a  Riemannian
   manifold $(M,g)$.
    Locally,  we can conformally prescribe the mean curvature to be any smooth
    function.
That is, given a smooth
    function $h$ on $M$, there is a metric
   $\widehat{g}$ in the conformal class $[g]$ such that the equation
  \begin{equation}\label{p-eq}
h=H^{\widehat{g}}_f
  \end{equation}
  is satisfied.

This extends to a global result for smooth functions $h$ with compact
support if the manifold is contractible, or it is a product compatible
with the slicing.
  \end{thm}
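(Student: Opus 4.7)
The plan is to rewrite the prescribed mean curvature condition as a first-order PDE for the conformal factor $\om$, linearise it by a change of unknown, and then solve by the method of characteristics along integral curves of the $g$-unit normal $\nu := \nabla f/|df|$. A direct calculation (the same one feeding into Proposition \ref{min-prop}) shows that under $\widehat{g}=e^{2\om}g$ the mean curvature of a slicing leaf transforms as
\begin{equation*}
H^{\widehat{g}}_f \;=\; e^{-\om}\bigl(H^{g}_f + \nu(\om)\bigr),
\end{equation*}
so \eqref{p-eq} becomes the quasilinear first-order equation $\nu(\om) = e^{\om} h - H^{g}_f$. The substitution $\Phi := e^{-\om}$, subject to the open constraint $\Phi>0$, then converts this into the \emph{linear} first-order PDE
\begin{equation*}
\nu(\Phi) \;=\; H^{g}_f\,\Phi - h .
\end{equation*}

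Since $\nu$ is nowhere tangent to the leaves, its integral curves are transverse characteristics, and any smooth initial value on a single leaf uniquely determines $\Phi$ off that leaf. Along a fixed integral curve parametrised by $g$-arc length $t$, the PDE reduces to the linear scalar ODE $\dot{u} = H^{g}_f\,u - h$, which possesses a unique solution on its maximal interval of existence and, being linear, cannot blow up in finite time. For the local assertion I would fix a relatively compact tubular neighbourhood $U$ of a piece of a reference leaf $L_0$, impose $\Phi\equiv 1$ on $L_0\cap U$, and flow along $\nu$; after shrinking $U$ if necessary, continuity preserves $\Phi>0$, and $\om := -\log \Phi$ yields the desired local conformal factor.

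For the global statement I would invoke the product identification $M\cong L\times I$ established in the two privileged cases by the proof of Proposition \ref{min-prop}, with $f$ realised as the projection to $I$ and the integral curves of $\nu$ realised (after the natural reparametrisation) as the $I$-fibres. Integrating the ODE fibrewise yields the explicit formula
\begin{equation*}
\Phi(x,t) \;=\; E(x,t)\!\left[\Phi_0(x) - \int_{t_0}^{t} E(x,s)^{-1}\,h(x,s)\,ds\right],\qquad E(x,t)\;:=\;\exp\!\int_{t_0}^{t} H^{g}_f(x,s)\,ds.
\end{equation*}
The main obstacle will be selecting a smooth positive initial function $\Phi_0$ on $L_0$ that guarantees $\Phi(x,t)>0$ for every $(x,t)\in L\times I$; this is precisely where the compact-support hypothesis on $h$ enters. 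Since $\mathrm{supp}(h)$ projects to a compact set $K\subset L$, the correction integral vanishes off $K$ (so any positive $\Phi_0$ suffices there), while on $K$ the supremum
\begin{equation*}
S(x)\;:=\;\sup_{t\in I}\,\Bigl|\int_{t_0}^{t} E(x,s)^{-1} h(x,s)\,ds\Bigr|
\end{equation*}
is a continuous function bounded on the compact $K$. A smooth global majorant $\Phi_0>S$, constructed by a partition-of-unity argument, then forces $\Phi>0$ throughout $M$, and $\om := -\log \Phi$ completes the proof.
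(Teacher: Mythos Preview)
Your argument is correct and closely parallel to the paper's, though packaged differently. The paper proceeds in two conformal steps: it first invokes Proposition~\ref{min-prop} to pass to a metric $\mathring{g}\in[g]$ with $H^{\mathring{g}}_f=0$, so that the transformation law collapses to $h=-\nu^a\nabla_a e^{-\om}$, and then integrates this directly along the flow lines to obtain $e^{-\om}=C-\int h\sqrt{\mathring{g}_{ss}}\,ds$, with positivity arranged by choosing the constant $C$ large (the compact-support hypothesis bounds the set of flow-line integrals of $h$). You instead work in a single step, retaining the $H^g_f$ term and absorbing it into the integrating factor $E=\exp\int H^g_f$. The two routes are in fact the same computation in disguise: your $E$ is precisely $e^{-\om_0}$ for the $\om_0$ constructed in the proof of Proposition~\ref{min-prop}, so your one-step solution implicitly factors through the paper's intermediate minimal metric. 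Your version is slightly more self-contained, while the paper's two-step version makes the positivity argument marginally cleaner, since after killing $H$ the remaining integral involves only $h$ and no separate control of $E^{-1}$ is needed. One small point to tidy in your write-up: be consistent about whether $t$ denotes $g$-arc length along the integral curves of $\nu$ or the slice coordinate $f$; the Jacobian $\sqrt{g_{tt}}$ relating them should appear explicitly in your fibrewise formula.
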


In \cite{walczak1984mean} Walczak looks at the type of functions that
can arise as the mean curvature of a Riemannian manifold with a
transversely orientable foliation. Part of the treatment there uses
conformal transformations, and this aspect has  links to our
result. Walczak and Schweitzer \cite{schweitzer2004prescribing}
consider foliations of general codimension. Given a vector field on
the manifold, they provide conditions under which the vector field
becomes the mean curvature vector of the foliation with respect to
some Riemannian metric on the manifold.

A main point here is that Theorem \ref{conf-presc} enables us to obtain generic CMC foliations as follows.
\begin{cor}\label{main-cor}
 Given a slice function $f$ and $\lambda:=\pm 1$, locally there is $\widehat{g}$, conformally related to $g$, such that
  $$
H^{\widehat{g}}_f =\lambda f .
$$
So the slicing determined by $f$ solves equation (\ref{CMCeq}) for the metric
$\widehat{g}\in [g]$, and in particular is CMC for $\widehat{g}$.
  \end{cor}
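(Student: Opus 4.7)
The plan is to specialise Theorem \ref{conf-presc} to the distinguished choice $h = \lambda f$. Since $f$ is smooth and $\lambda \in \{+1,-1\}$, the function $h := \lambda f$ is smooth on $M$; applying Theorem \ref{conf-presc} locally yields a metric $\widehat{g}$ in the conformal class $[g]$ such that
\[
H^{\widehat{g}}_f \;=\; h \;=\; \lambda f.
\]

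Next I would verify that this really exhibits a generic CMC foliation. The slice function $f$ is, by construction, constant along each leaf of the slicing, so $\lambda f$, and hence $H^{\widehat{g}}_f$, is constant along each leaf; every leaf therefore has constant mean curvature with respect to $\widehat{g}$. Moreover $df$ is nowhere zero, so $H^{\widehat{g}}_f = \lambda f$ has non-vanishing differential transverse to the leaves, which supplies the generic condition required in Theorem \ref{cmc}.

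Finally, the identity $H^{\widehat{g}}_f = \lambda f$ is precisely equation (\ref{the-eq}), which is just the compact form of (\ref{CMCeq}) after substituting the explicit expression for the mean curvature of a level-set slicing recalled in Section \ref{fol-sec}. Thus the slicing determined by $f$ solves (\ref{CMCeq}) with respect to $\widehat{g}$, completing the statement of the corollary.

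There is no genuine obstacle once Theorem \ref{conf-presc} is in hand: the corollary is an immediate consequence of choosing $h$ to be the specific function $\lambda f$, which happens to be constant along the leaves. The only point to check is that $\lambda f$ is a legitimate input to Theorem \ref{conf-presc}, and this is clear since the theorem imposes only smoothness on $h$ in the local setting, with no constraint on its sign or zeros.
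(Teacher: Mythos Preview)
Your proposal is correct and matches the paper's approach: the corollary is obtained precisely by specialising Theorem \ref{conf-presc} to the choice $h=\lambda f$, and the paper presents it in just this way (with the fuller statement given as Corollary \ref{c-solve}). Your additional remarks, checking that the resulting slicing is CMC and indeed generic because $dH^{\widehat g}_f=\lambda\, df$ is nowhere zero, are appropriate and make the deduction of equation (\ref{CMCeq}) explicit.
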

 \noindent The full and stronger result is stated in Corollary
 \ref{c-solve}.  If one drops the requirement to solve conformally
 then one can use the approach of \cite{walczak1984mean} to find
 metrics solving \eqref{the-eq}.

Aside from solving equation (\ref{CMCeq}), and the $\lambda=0$
version, both Proposition \ref{min-prop} and Corollary \ref{main-cor}
provide a way, in the presence of a foliation, of choosing
distinguished metrics from a conformal class.

\medskip

In Section 6, we look at conformally compact manifolds. These are
structures that have been the subject of intense scrutiny in both
mathematics and physics since the pioneering works of Fefferman and
Graham \cite{graham1985charles,fefferman2012ambient}, see
e.g. \cite{mazzeo1988hodge,maldacena1999large,Graham2003scattering,case2021boundary,fine2022ambient,graham2019chern} and references therein.

Let $\ol{M}$ be a compact $n$-manifold with boundary, and
write $M$ for the interior. So $\ol{M}=M\cup \partial M$ where the
boundary $\partial{M}$ is a smooth closed $(n-1)$-manifold. A metric
$\gp$ on $M$ is said to be {\em conformally compact} if the following
holds: there is a metric $g$ on $\ol{M}$ such that, in some collar
neighbourhood of $\partial M$, we have $g=r^2 \gp$ with $r$ a slice
function such that its zero locus is exactly $\partial M$, i.e.,
$$
\partial M = \mathcal{Z}(r).
$$
We say that such a slice function is (boundary) {\em defining}.
The restriction of such $g$ to $\otimes^2 T\partial M$ determines a
canonical conformal structure on $\partial M$. This link between Riemannian and conformal geometry is one of key motivations for studying conformally compact manifolds.

It is interesting to consider whether, given $g^{+}$, there is some
canonical way to choose $r$, at least given a metric from the
conformal class induced on $\partial M$. A conformally compact metric
$\gp$ is said to be {\em asymptotically hyperbolic (AH)} if $|dr|_g=1$
{\em along} $\partial M$. This means that the sectional curvatures of
$\gp$ approach $-1$ asymptotically to the boundary.  For such metrics
Graham and Lee show that, in a neighbourhood of the boundary, there is
a slice function $r$ so that $|dr|_g=1$ \cite{graham1991einstein}.
Linked to our theme of CMC
foliations a very natural question concerns the following possible
alternative geometric characterisation of distinguished defining
functions: \\

\begin{problem}\label{question}
 Given $\gp$, is there a defining slice function $\rho$ so that
$$
g:=\rho^2\gp
$$
is a metric to the boundary and the level sets of $\rho$ are CMC for $g$?
\end{problem}

Studying versions of this question is the main aim of  Section \ref{ccpct}.
A simpler question is whether, given a conformal class $\cc$ on
$\ol{M}$ (meaning $\cc$ is a equivalence class of conformally related Riemannian metrics), there are compatible examples of such $g$ and $\gp$. Of course the upper half space and Poincar\'e-ball models of hyperbolic space provide examples, see Examples \ref{Eucl-half} and \ref{Poin-disk}.  The
following Proposition shows that examples exist far more generally.

\begin{prop}\label{step}
Let $\ol{M}$ be a compact $n$-manifold with boundary that is equipped with a
conformal structure $\cc$. There is a conformally compact metric $\gp$
on $M$ such that in some collar
$$
g=\rho^2 \gp
$$ where $g\in \cc$ is a metric to the boundary, $\rho$ is a slice
function with mean curvature $\rho=H^g_\rho$, and $\partial M=\mathcal{Z}(\rho)$.
\end{prop}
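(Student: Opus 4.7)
The plan is to apply Theorem~\ref{conf-presc} to a smooth boundary defining function for $\partial M$, and then verify that the resulting conformal factor is smooth up to the boundary so that the new metric still lies in $\cc$. First I would fix any representative $g_0 \in \cc$ on $\ol{M}$ and pick a smooth boundary defining function $r$ for $\partial M$ --- for instance the signed $g_0$-distance to $\partial M$ on a tubular neighbourhood, patched to something positive elsewhere. On some collar $U$ of $\partial M$ the differential $dr$ is nowhere zero, so $r$ is a slice function on $U \cap M$ with $\mathcal{Z}(r|_U) = \partial M$.

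Next I would invoke Theorem~\ref{conf-presc} with target function $h := r$ to produce $\omega$ on $U \cap M$ such that $\widehat{g} := e^{2\omega} g_0$ satisfies $H^{\widehat{g}}_r = r$. Using the standard conformal transformation of the (normalised) mean curvature,
\[
H^{\widehat{g}}_r = e^{-\omega}\bigl(H^{g_0}_r + N(\omega)\bigr), \qquad N := \nabla r / |\nabla r|_{g_0},
\]
the prescription reduces to the scalar first-order equation
\[
N(\omega) = r e^{\omega} - H^{g_0}_r,
\]
which, restricted to each integral curve of $N$, is an ODE with smooth right-hand side. The main technical point of the proof is the boundary regularity of the resulting $\omega$: because $|dr|_{g_0}$ is nowhere zero on $U$, both $N$ and $H^{g_0}_r$ are smooth up to $\partial M$, and prescribing any smooth initial value on $\partial M$ (say $\omega|_{\partial M} = 0$) and integrating inward along the flow of $N$ yields $\omega \in C^\infty(\ol{V})$ on a possibly smaller collar $V$. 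The nonlinearity $r e^\omega$ vanishes along $\partial M$, so no degeneracy obstructs smoothness there --- this is where the argument goes beyond the purely interior content of Theorem~\ref{conf-presc}.

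Finally I would globalise using a cutoff $\chi \in C^\infty(\ol{M})$ with $\chi \equiv 1$ on a smaller collar $W \Subset V$ and $\operatorname{supp} \chi \subset V$. Setting $g := e^{2\chi \omega} g_0 \in \cc$ yields a smooth metric on $\ol{M}$ which agrees with $\widehat{g}$ on $W$, and $\rho := r|_W$ then satisfies $H^g_\rho = \rho$ and $\mathcal{Z}(\rho) = \partial M \cap W$. Defining $\gp := \rho^{-2} g$ on $W \cap M$ and extending to a smooth Riemannian metric on all of $M$ via a partition of unity against any auxiliary metric on $M$ produces the required conformally compact $\gp$, with $g = \rho^2 \gp$ on the collar $W$. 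All conditions of the proposition then follow by construction.
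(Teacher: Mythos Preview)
Your argument is correct and follows essentially the same route as the paper: pick a representative metric and a boundary defining function, conformally prescribe the mean curvature of its level sets to equal the function itself, extend the conformal factor, and set $\gp=g/\rho^2$. The only differences are cosmetic: the paper invokes Corollary~\ref{c-solve} (which goes through the intermediate minimal metric of Proposition~\ref{min-prop}) whereas you integrate the single nonlinear ODE $N(\omega)=re^{\omega}-H^{g_0}_r$ directly, and you are more explicit than the paper about why $\omega$ is smooth up to $\partial M$.
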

\noindent This is proved in Section \ref{ccpct}. Returning to the
Question above, we seek first defining functions $r$ such that $H^g_r$
is minimal. We show that for AH manifolds the problem can be solved formally.

\begin{prop}\label{inductionminimal}
Let $\ol{M}$ compact $n$-manifold with boundary, and $\gp$ an
asymptotic hyperbolic metric on the interior.

For each boundary metric $g_{\partial M}$, in the canonical conformal
class, there is a slice function $\bar{r}$, that is defining for the boundary $\partial
M$, such that $\bar{g} =(\bar{r})^2 \gp $ induces the metric
$g_{\partial M}$ and the foliation
determined by $\bar{r}$ satisfies
    $$
    H^{\bar{g}}_{\bar{r}}
    =O({\bar{r}}^\ell)
    $$
    for any $\ell\in \mathbb{Z}_{\geq 0}$.
    This determines $\bar{r}$ uniquely up to $+O({\bar{r}^{\ell+1}})$.
\end{prop}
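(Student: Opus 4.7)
The plan is to solve the problem formally by Taylor expansion at $\partial M$ in Graham--Lee normal form, and then realise the formal series smoothly with Borel's lemma.

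By~\cite{graham1991einstein}, the representative $g_{\partial M}$ in the conformal infinity class determines, in a collar of $\partial M$, a unique defining function $r$ such that $g := r^2 \gp$ is in geodesic normal form
\[
g = dr^2 + h_r, \qquad h_r\big|_{r=0} = g_{\partial M}.
\]
I seek $\bar{r} = r\psi$ with $\psi$ smooth, positive, and $\psi|_{r=0} = 1$; then $\bar{g} := (\bar{r})^2 \gp = \psi^2 g$ automatically induces $g_{\partial M}$ on $\partial M$. Writing $\psi = 1 + \sum_{k\geq 1} r^k a_k$ with $a_k\in C^\infty(\partial M)$ extended $r$-independently into the collar, my plan is to inductively determine the $a_k$.

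By the conformal transformation formula for mean curvature (derived as in the proof of Theorem~\ref{conf-presc}),
\[
H^{\bar{g}}_{\bar{r}} = \psi^{-1}\bigl(H^g_{\bar{r}} + \nu^g(\log\psi)\bigr),
\]
where $\nu^g$ is the $g$-unit normal to the $\bar{r}$-level sets. Both summands have smooth Taylor expansions at $r = 0$, and I will match coefficients order by order. Two observations will drive the induction. First, a direct expansion of~\eqref{CMCeq} combined with $\Gamma^a_{rr} = 0$ and the identity $|d\bar{r}|_g^2 - (\partial_r\bar{r})^2 = h_r^{ij}\partial_i\bar{r}\partial_j\bar{r}$ shows that the two $\partial_r^2\bar{r}$-contributions in $H^g_{\bar{r}}$ combine into a single term proportional to $\partial_r^2\bar{r}\cdot h_r^{ij}\partial_i\bar{r}\partial_j\bar{r}/|d\bar{r}|_g^3$; since $\partial_i\bar{r} = r\partial_i\psi$ is of order $r^2$ near $\partial M$ (as $\partial_i\psi|_{r=0}=0$), this combination contributes no $a_k$ at order $r^{k-1}$. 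Second, a direct computation of $\partial_r\log\psi = \sum_{k \geq 1} k b_k r^{k-1}$ with $b_k = a_k + P_k(a_1,\dots, a_{k-1})$, together with $\nu^{g,r} = 1 + O(r^4)$ in normal form and $\nu^{g,i}\partial_i\log\psi$ starting at order $r^3$, shows that the $r^{k-1}$ coefficient of $\nu^g(\log\psi)$ contains $a_k$ with coefficient $k$. Combining these, the $r^{k-1}$ coefficient of $H^{\bar{g}}_{\bar{r}}$ takes the form
\[
k\, a_k + Q_k(a_1,\dots, a_{k-1};\, h_r),
\]
with $Q_k$ a polynomial in the previously determined data and the $r$-Taylor coefficients of $h_r$ at $r=0$. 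Setting this to zero determines $a_k$ uniquely.

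Inductively solving for $a_1, a_2, \dots$ and invoking Borel's lemma produces a smooth positive $\psi$ realising these Taylor coefficients; extending $\bar{r} = r\psi$ off the collar by a smooth positive cutoff then gives the required slice function on $\ol{M} \setminus \partial M$. Since $\bar{r}/r$ extends smoothly and positively to $\partial M$, $O(r^\ell) = O(\bar{r}^\ell)$, and uniqueness modulo $O(\bar{r}^{\ell+1})$ follows from the unique determination of each $a_k$ at each inductive step. The principal technical obstacle in this plan will be carrying out the Taylor expansions carefully enough to verify the $a_k$-cancellation in $H^g_{\bar{r}}$; but this reduces to the elementary bookkeeping sketched above, hinging on the normal-form identities $\Gamma^a_{rr} = 0$ and $\partial_i\psi|_{r=0} = 0$.
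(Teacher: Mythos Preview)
Your approach is correct and reaches the same conclusion, but the route differs from the paper's in two ways worth noting.

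First, the paper does not invoke the Graham--Lee normal form. It starts from an arbitrary compactifying metric $g=r^2\gp$ with $|dr|_g^2=1+O(r)$ and proceeds by successive conformal modifications $r_{k+1}=e^{\omega_k}r_k$, choosing $\omega_k=-\frac{r_kH_{r_k}}{k+1}$ to kill the leading term of $H_{r_k}$. The verification is done directly from the expanded formula~\eqref{exp-min-ver-expanded} for $H_{\bar r}$ under a general conformal change, and the only delicate step is the term $r_k\bigl(|dr_k|^2\Delta\omega - r_k^ar_k^b\nabla_a\nabla_b\omega\bigr)$: the paper shows the second $r$-derivatives of $\omega$ cancel here precisely because $|dr_k|^2=1+O(r_k)$. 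Your cancellation --- that the two $\partial_r^2\bar r$ contributions to $H^g_{\bar r}$ combine into $\partial_r^2\bar r\cdot h_r^{ij}\partial_i\bar r\partial_j\bar r/|d\bar r|^3=O(r^4)\cdot\partial_r^2\bar r$ --- is the same phenomenon, seen more cleanly because you have imposed $g=dr^2+h_r$ from the outset. The normal form buys you simpler bookkeeping (exact identities $\Gamma^a_{rr}=0$, $g^{rr}=1$) at the cost of invoking~\cite{graham1991einstein}; the paper's argument is slightly more self-contained.

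Second, you determine the full Taylor series of a single conformal factor $\psi$ and then appeal to Borel's lemma to produce one smooth $\bar r$ with $H_{\bar r}=O(\bar r^\infty)$, whereas the paper's proof, as written, delivers a sequence of defining functions $r_\ell$ with $H_{r_\ell}=O(r_\ell^\ell)$ and leaves the Borel step implicit. Your version therefore states a marginally sharper existence result. The uniqueness arguments are essentially identical: in both proofs the choice at each order is forced by the requirement that the next Taylor coefficient of $H$ vanish.
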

\noindent Here and throughout $O(\rho)$ means a function of the form $\rho F$
where $F$ is smooth on $\ol{M}$.

For the study of conformal hypersurface invariants it can be useful to
use the hypersurface to somehow normalise or capture the transverse
jets of the choices of metric from the conformal class. For example in
\cite{blitz2022toward,blitz2023dirichlet}
the $T$-curvatures of
\cite{gover2021conformal} are combined with a formal solution of a
singular-Yamabe problem (which formally finds a canonical conformally
compact $\gp$ along the hypersurface) to achieve this. It seems that
Proposition \ref{inductionminimal} could provide an effective
alternative to the use of the $T$-curvatures for certain
applications. Indeed it would interesting to understand the links of
the results here with the $T$-curvatures. Each $T$-curvature is a type
of higher order mean curvature.

More generally (than Proposition \ref{inductionminimal}) one might
hope to be able to find a slice function $r$ such that $H^g_r$ is a
function of $r$, $G(r)$. (See the discussion in Section \ref{ccpct}.)
We show that, at least formally to all orders, $H^g_r=r$ is
achievable, as follows.
\begin{prop}\label{inductionCMC}
Let $\ol{M}$ compact $n$-manifold with boundary, and $\gp$ an
asymptotic hyperbolic metric on the interior.

For each boundary metric $g_{\partial M}$, in the canonical conformal
class, there is a slice function $\bar{r}$, that is defining for the boundary $\partial
M$, such that $\bar{g} =(\bar{r})^2 \gp $ induces the metric
$g_{\partial M}$ and the foliation
determined by $\bar{r}$ satisfies
    $$
    H^{\bar{g}}_{\bar{r}}
    =\bar{r}+O({\bar{r}}^\ell)
    $$
    for any $\ell\in \mathbb{Z}_{\geq 0}$.
This determines $\bar{r}$ uniquely up to $+O({\bar{r}}^{\ell+1})$.
\end{prop}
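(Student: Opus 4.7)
The plan is to construct $\bar r$ as a formal Taylor expansion along $\partial M$, determining the successive boundary coefficients by induction on $\ell$, in a manner strictly parallel to Proposition~\ref{inductionminimal}. I would first invoke the Graham--Lee theorem \cite{graham1991einstein} to fix a reference defining function $r$ uniquely associated with $\gp$ and the prescribed boundary metric: in a collar of $\partial M$ one has $g := r^2\gp = dr^2 + h_r$ with $h_0 = g_{\partial M}$ and $|dr|_g = 1$. Any defining function inducing the same boundary metric admits an expansion
\[
\bar r \;=\; r + u_1 r^2 + u_2 r^3 + \cdots ,
\]
with $u_j$ smooth up to $\partial M$; modulo $+O(\bar r^{\ell+1})$ only finitely many of the boundary values $u_j|_{\partial M}$ are constrained.

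For the inductive step, suppose $u_1|_{\partial M},\dots,u_{k-1}|_{\partial M}$ have been fixed so that $H^{\bar g}_{\bar r} - \bar r = r^{k-1}\rho + O(r^k)$ for some $\rho\in C^\infty$. I would perturb $\bar r \mapsto \bar r + v\,r^{k+1}$ with $v\in C^\infty(\ol{M})$; this simultaneously varies the slicing (level sets shift by $O(r^{k+1})$ along $\partial_r$) and the compactifying metric $\bar g = \bar r^2\gp$ (by the conformal factor $1 + 2r^k v + \cdots$). Substituting into the mean-curvature formula of Theorem~\ref{cmc} and expanding in $r$ in the Graham--Lee gauge, the new contribution at order $r^{k-1}$ is an algebraic multiple $C(n,k)\,v|_{\partial M}$ of the boundary value of $v$: tangential derivatives of $v$ enter only at orders $\geq r^k$, because in $|d\bar r|_{\bar g}^{-2}\,\bar\nabla^a\bar r\,\bar\nabla^b\bar r$ the two covectors project onto $\partial_r\otimes\partial_r$ on $\partial M$ where $|dr|_g = 1$. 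Setting $v|_{\partial M} = -\rho|_{\partial M}/C(n,k)$ kills the error at order $r^{k-1}$ and fixes $u_k|_{\partial M}$.

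The main obstacle is verifying $C(n,k)\neq 0$ for all $n\geq 2$ and $k\geq 1$. The key observation is that the inhomogeneity $-\bar r$ appearing here (as opposed to in the minimal case of Proposition~\ref{inductionminimal}) differs from the homogeneous equation only through the \emph{already-fixed} lower coefficients $u_1,\dots,u_{k-1}$ at the order where $v$ first appears; the $-\bar r$ contribution of the new perturbation $v\,r^{k+1}$ lies one power of $r$ below the dominant term produced by $v\,r^{k+1}$ via the Hessian in $H^{\bar g}_{\bar r}$. Hence $C(n,k)$ is \emph{identical} to the coefficient needed for Proposition~\ref{inductionminimal}, and the non-vanishing follows from the same direct expansion: $\bar\Delta\bar r$ applied to $v\,r^{k+1}$ produces $k(k+1)v$ at $r=0$, with further combinatorial factors arising from $|d\bar r|_{\bar g}^{-3}$ and the conformal rescaling $\bar g = \bar r^2\gp$, yielding a nonzero rational function of $n$ and $k$.

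Finally, after enough stages the boundary values $u_j|_{\partial M}$ relevant mod $O(\bar r^{\ell+1})$ are determined; Borel's lemma produces a smooth function on a collar realising the prescribed transverse jet along $\partial M$, and extending smoothly off the collar to a defining function on $\ol{M}$ yields the required $\bar r$. Uniqueness modulo $+O(\bar r^{\ell+1})$ follows immediately because each $u_k|_{\partial M}$ was forced by a nondegenerate linear equation, and the identity $O(\bar r^m) = O(r^m)$, valid since $\bar r = r + O(r^2)$, lets the conclusion be read in either filtration.
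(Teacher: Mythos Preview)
Your approach is essentially the same as the paper's, repackaged in additive rather than multiplicative language: the paper iterates $r_{k+1}=e^{\omega}r_k$ with the explicit choice $\omega=-\frac{r_k^{k+1}F_k}{k+1}$ and verifies cancellation directly against the expanded formula \eqref{exp-min-ver-expanded}, while you perturb a cumulative Taylor polynomial $\bar r\mapsto\bar r+v\,r^{k+1}$ and argue that the linearised coefficient $C(n,k)$ is nonzero. Your reduction to Proposition~\ref{inductionminimal} is valid---the inhomogeneity $-\bar r$ enters two orders above where the perturbation first acts on $H$, so the leading coefficient is unchanged---and your use of the Graham--Lee gauge eliminates the base-case step that the paper handles separately.

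One point to tighten: your direct justification of $C(n,k)\neq 0$ via ``$\bar\Delta\bar r$ applied to $v\,r^{k+1}$ produces $k(k+1)v$'' misidentifies the mechanism. In the mean-curvature formula \eqref{Hf-form} the $\partial_r^2$ contribution from $\Delta\bar r$ cancels exactly against the $\bar r^a\bar r^b\nabla_a\bar r_b$ term (mean curvature sees only tangential second derivatives). The actual leading effect at order $r^{k-1}$ comes from the conformal rescaling of the metric $\bar g=\bar r^2\gp$: writing the perturbation multiplicatively as $\omega'\approx v\,r^k$, the transformation law \eqref{H-trans} gives $\nu^a\nabla_a\omega'\approx k\,v\,r^{k-1}$, so $C(n,k)$ is essentially $k$ rather than $k(k+1)$. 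This does not break your argument since you correctly defer the nonvanishing to the minimal case already established, but the sentence as written would mislead a reader trying to extract the explicit constant. Also, the $-\bar r$ contribution lies \emph{two} powers of $r$ above the dominant term, not one.
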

It seems likely that our approach to Propositions
\ref{inductionminimal} and \ref{inductionCMC} could be adapted to drop
the AH restriction and treat general conformally compact manifolds.

Rather than the question above, in \cite{mazzeo2011constant} Mazzeo
and Pacard treat the problem of foliations near the infinity of
AH metrics that are, in the notation here, CMC for
the metric $g^+$. In particular Theorem 1.1 of that source provides a
main result with cases depending on the Yamabe constant of the
boundary metric. Although there are similarities to the question
looked here there are also critical differences which means that one expects rather different results, see Remark \ref{cfMP}.  They point out that much of their work extends to
other Weingarten foliations.  A similar remark applies to Sections
\ref{fol-sec} and \ref{CMCfol} below. The ideas there could easily
applied to other Weingarten curvatures (i.e.\ $k$-homogenous
functionals of the principal curvatures).
Existence of CMC foliations on high order perturbations of the AdS-Schwarzschild space was proved by \cite{rigger2004foliation}, using mean curvature flow. Neves and Tian \cite{neves2009existence,neves2010existence}  established
uniqueness and extended the existence theory in this setting.

\section{Basics} \label{basics}

Given a slicing, different functions can provide the same foliation, as follows.
\begin{lemma}\label{slice-lemma}
 On a manifold $N$, given a slice function $f$ taking values in
 $I\subset \mathbb{R}$, and a strictly increasing (decreasing)
 smooth function $F:I\to \mathbb{R}$ then $F\circ f$ is another slice
 function for the given foliation with the same (respectively,
 opposite) orientation. The converse also holds: For a slice function $f$, if $f:N\to I$ is
 surjective then any other slice function for the foliation with the same (respectively,
 opposite) orientation is $F\circ f$ for a strictly increasing
 (decreasing) smooth function $F:I\to \mathbb{R}$.
\end{lemma}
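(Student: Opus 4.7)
The plan is to tackle the two implications separately. For the forward direction, the key computation is $d(F \circ f) = F'(f)\, df$, from which everything follows: since a strictly monotonic smooth $F$ (with $F'$ nowhere vanishing on $I$) gives $F'(f)$ nowhere vanishing on $N$, the form $d(F\circ f)$ is nowhere zero, so $F\circ f$ is a slice function. Injectivity of $F$ gives $(F\circ f)^{-1}(c)=f^{-1}(F^{-1}(c))$, so each level set of $F\circ f$ is a single level set of $f$, hence a leaf of the given foliation. The transverse orientation issue is then immediate: $d(F\circ f)=F'(f)\,df$ agrees with $df$ as a section of $Q^*$ precisely when $F'>0$, and is opposite when $F'<0$.

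For the converse, suppose $g$ is another slice function for the same transversely oriented foliation, with $f\colon N\to I$ surjective. Since the leaves of the foliation are simultaneously the level sets of $f$ and of $g$, the function $g$ is constant on each level set of $f$. The plan is to exploit this to define $F\colon I\to\R$ by
\[
F(t) := g(x) \quad \text{for any } x\in f^{-1}(t),
\]
which is well-defined by the previous observation and surjective $f$. By construction $g=F\circ f$, so it remains to show $F$ is smooth and strictly monotonic of the correct sign.

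Smoothness of $F$ is the main technical point and will be handled locally. Since $df$ is nowhere zero, $f$ is a submersion, so near any $t_0\in I$ and any chosen $x_0\in f^{-1}(t_0)$, one can pick a smooth local section $s$ of $f$ with $f\circ s=\id$ on a neighbourhood of $t_0$; then $F=g\circ s$ on that neighbourhood, hence smooth. Strict monotonicity follows from the identity $dg = F'(f)\,df$, obtained by differentiating $g=F\circ f$: since $df$ and $dg$ are both nowhere-zero sections of the rank-one bundle $Q^*$, we have $dg=h\,df$ with $h$ a nowhere-zero smooth function on $N$, whose sign is constant on each component and is determined by whether the two transverse orientations agree. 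Comparing with $F'(f)\,df$ gives $F'(f)=h$, and since $f$ is surjective onto $I$, the sign of $F'$ on $I$ is constant, so $F$ is strictly monotonic of the predicted type.

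The only genuine obstacle is the smoothness argument, and that is resolved by the submersion-with-local-section observation above; well-definedness and monotonicity are essentially formal consequences of the slicing hypothesis and of the identity $d(F\circ f)=F'(f)\,df$. I would write the proof in the order: forward direction (one short paragraph), definition of $F$ and well-definedness, smoothness via local sections, and finally the monotonicity/orientation computation.
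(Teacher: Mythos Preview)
Your proof is correct, and the forward direction matches the paper's essentially verbatim. For the converse, however, you take a genuinely different route. The paper begins the same way you do, writing $dh=k\,df$ for some nowhere-zero function $k$, but then applies the exterior derivative to obtain $dk\wedge df=0$; in local coordinates $(x^1,\dots,x^n)$ with $x^1=f$ this gives $\partial k/\partial x^i=0$ for $i\geq 2$, so $k=k(x^1)$ and hence $h=F(x^1)$ for some one-variable $F$ with $F'\neq 0$. You instead define $F$ directly from the constancy of $g$ on the fibres of $f$ and prove smoothness by composing $g$ with a local section of the submersion $f$. Your approach is more geometric and makes the construction of $F$ explicit from the outset; the paper's approach is a quick differential-forms argument that avoids the section machinery but leaves the passage from $k=k(f)$ to $h=F(f)$, and the global patching of the locally defined $F$'s, somewhat implicit. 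Both arguments tacitly use connectedness (of $N$, or equivalently that the locally defined $F$'s agree on overlaps) to pass from local to global; your line ``the leaves of the foliation are simultaneously the level sets of $f$ and of $g$'' is the place where this enters for you, and it deserves the same caveat.
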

\begin{proof}
  $\Rightarrow $:   This is obvious as $F\circ f$ has nowhere vanishing derivative and
  has the same  level sets as $f$, with the orientation in agreement or swapped depending on whether $F$ is strictly increasing or, respectively, decreasing:
  $$
  d (F\circ f)_p= F'(f)(p)\cdot df_p \qquad p\in N.
  $$
  If  $F'(f)(p)>0$ then $F'(f)>0$ at all points in $N$, and
the other case is similar.

$\Leftarrow$: Suppose that $h$ is another slice function for the slicing given by $f$. Then
$$
dh = k df ,
$$
for some real valued  function $k$.
As $f$ is a slice function, then given any point $p$
there is an open neighbourhood $U$ of $p$ on which there are a coordinates
$(x^1,\cdots ,x^n)$ with $x^1=f$.
Then taking the exterior derivative of the previous display gives
$$
dk\wedge dx^1=0 ,
$$
and so
$$
\frac{\partial k}{\partial x^i}=0 \qquad i=2,\cdots ,n
$$ and $k =k(x^1)$, and so also we have $h=F(x^1)$ for some 1-variable
function $F$ satisfying that $F'$ is not zero on the range of
$f$. Since this holds locally everywhere on $N$ the result
follows.  \end{proof}

\medskip

In the following we will work on a manifold $M$ and usually assume this
admits a slicing, so this will imply topological restrictions on
$M$. The point is that the results have local implications on any
manifold.

\section{PDEs defining slicings by CMC hypersurfaces}\label{fol-sec}

In the following we shall work on a connected Riemannian manifold $(M,g)$ of
dimension $n\geq 2$. Some notation: Given a 1-form field $u$ we write $|u|$ (or sometimes $|u|_g$) to mean $\sqrt{g^{-1}(u,u)}$. We write $\nabla$ or $\nabla_a$ to denote the Levi-Civita connection determined by $g$ and write $\Delta $ for the Laplacian defined by
$$
\Delta:= g^{ab}\nabla_a\nabla_b =\nabla^b\nabla_b .
$$

Recall that along an embedded
hypersurface $\Sigma$ with unit conormal field $\nu_a$ one has that the
second fundamental form is given by
$$
h_{ab}= (\delta_a^c-\nu_a\nu^c)\nabla_c \nu_b= \nabla_a \nu_b -\nu_a\nu^c\nabla_c \nu_b ,
$$ and this is independent of how $\nu_a$ is smoothly extended
off $\Sigma$. The indices refer to the ambient $TM$. Here and below indices are abstract unless otherwise indicated. Thus the
mean curvature is given by
\begin{equation}\label{mean-ex}
H= \frac{1}{(n-1)} (\nabla_a \nu^a - \nu^a\nu^b \nabla_a \nu_b) \qquad \nu^b\nu_b =1 \quad \mbox{along} \quad \Sigma,
\end{equation}
and again this is independent of how $\nu$ is extended of $\Sigma$.  A
hypersurface is said to be of {\em constant mean curvature (CMC)} if $H$ is
constant along the hypersurface.

We have the following observation.
Let $f:M\to \mathbb{R}$ be a smooth slice function. Then the
unit 1-form field $\nu_f:=df /|df|$ (which we will usually denote simply by $\nu$ if $f$ is understood) is everywhere normal to the slicing
of $M$ by level sets of $f$.
Now substitute $\nu=df /|df|$ into expression (\ref{mean-ex}) to give
\begin{equation}\label{firstH}
H= \frac{1}{(n-1)} (\nabla_a \nu^a) ,
\end{equation}

and,  using that
$$
\nabla_a \frac{1}{| df|}=\nabla_a (\nabla^b f \cdot \nabla_b f)^{-\frac{1}{2}}= - \frac{1}{| df|^3 }\cdot (\nabla^b f)(\nabla_a\nabla _b f),
$$

we have:
\begin{prop}\label{Hprop}
  Let $f:M\to \mathbb{R}$ be a slice function. Then $H_f:M\to
  \mathbb{R}$, given by
  \begin{equation}\label{Hf-form}
    (n-1) H_f= \frac{1}{| df|} \Delta f- \frac{1}{|df|^3}\cdot (\nabla^a f) (\nabla^b f)(\nabla_a\nabla _b f),
  \end{equation}
  satisfies that, at every $p\in M$, $H_f(p)$ is the mean curvature
  of the smooth hypersurface level set $\{x\in M\mid f(x)=f(p)\}$.
  \end{prop}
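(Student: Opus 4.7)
The plan is to apply the general hypersurface mean-curvature formula (\ref{mean-ex}) to the specific extension of the unit conormal field furnished by a slice function, and to exploit the fact that this formula does not depend on how the conormal is extended off the hypersurface. Since $f$ is a slice function, $df$ is nowhere vanishing, so $\nu_a := (df)_a/|df|$ is a smooth unit 1-form field on all of $M$. Fix $p \in M$ and write $\Si_p$ for the level hypersurface $\{x \in M \mid f(x)=f(p)\}$. Along $\Si_p$ the field $\nu$ restricts to a unit conormal, so by the extension-independence of (\ref{mean-ex}) the mean curvature of $\Si_p$ at $p$ equals $\tfrac{1}{n-1}(\nabla_a \nu^a - \nu^a \nu^b \nabla_a \nu_b)$ evaluated at $p$.

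Next I would observe that the second term in this expression vanishes identically, not merely on $\Si_p$. Indeed $\nu^b \nu_b = 1$ everywhere on $M$, so differentiating gives $\nu^b \nabla_a \nu_b = 0$, and contracting with $\nu^a$ yields $\nu^a \nu^b \nabla_a \nu_b =0$. This collapses the formula to (\ref{firstH}), namely $(n-1)H_f = \nabla_a \nu^a$ at $p$, and because $p$ was arbitrary this holds everywhere.

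The remaining step is the routine computation of $\nabla_a \nu^a$ via the Leibniz rule, writing $\nu^a = (\nabla^a f)/|df|$:
\begin{equation*}
\nabla_a \nu^a = \frac{1}{|df|}\nabla_a \nabla^a f + (\nabla^a f)\,\nabla_a\!\left(\frac{1}{|df|}\right) = \frac{\Delta f}{|df|} - \frac{1}{|df|^3}(\nabla^a f)(\nabla^b f)(\nabla_a \nabla_b f),
\end{equation*}
where the second equality uses the identity for $\nabla_a (1/|df|)$ recorded just before the proposition. Multiplying by $(n-1)$ gives exactly the claimed formula (\ref{Hf-form}).

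There is no real obstacle: the only point that needs care is confirming that the ambient-defined quantity $H_f$ recovers the intrinsic mean curvature of each level set, and this is handled entirely by the extension-independence of (\ref{mean-ex}) together with the unit-length identity $\nu^b \nabla_a \nu_b = 0$. The rest is an algebraic manipulation of the gradient and Hessian of $f$.
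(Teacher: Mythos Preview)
Your proof is correct and follows essentially the same approach as the paper: substitute $\nu=df/|df|$ into the mean-curvature formula (\ref{mean-ex}), reduce to (\ref{firstH}), and expand via the Leibniz rule using the computed derivative of $1/|df|$. You are in fact slightly more careful than the paper, which simply asserts (\ref{firstH}) without explicitly noting that $\nu^a\nu^b\nabla_a\nu_b$ vanishes because $\nu$ has unit length on all of $M$.
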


We will say a slicing, as in Proposition \ref{Hprop} is a {\em CMC
  slicing} if $H_f$ is constant on each leaf of the slicing (but
different leaves may have different mean curvature).

\begin{example}\label{two}
  In Euclidean space with the origin removed, $\mathbb{R}^n\setminus
  \{0\}$, we have the sphere slicing given by the level sets of
  $f:=r^2=(x^1)^2+\cdots +(x^n)^2$. Then, with our sign conventions,
  $H_f=\frac{1}{r}$.

    \end{example}

By the construction of $H_f$, we have the following Lemma.
\begin{lemma}\label{invariance}
Let $f:M\to \mathbb{R}$ be a slice function. Then $H_f =H_{F\circ f}$ (or $H_f =-H_{F\circ f}$ ) where $F:\mathbb{R}\to \mathbb{R}$ is
any smooth strictly increasing (or, respectively, strictly decreasing) function.
\end{lemma}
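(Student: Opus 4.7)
The plan is to exploit the fact that formula \eqref{firstH} for $H_f$ was already reduced to $H_f=\tfrac{1}{n-1}\nabla_a\nu_f^a$, where the conormal field
\[
\nu_f=\frac{df}{|df|}
\]
is a globally defined, everywhere unit, $1$-form field on $M$ (not just along a single leaf). So the entire computation of $H_f$ depends on $f$ only through $\nu_f$, and the result reduces to comparing $\nu_{F\circ f}$ with $\nu_f$.

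First I would invoke Lemma \ref{slice-lemma} to record that $F\circ f$ is again a slice function for the same foliation (with the same orientation if $F'>0$, opposite if $F'<0$), so that $H_{F\circ f}$ is well-defined by Proposition \ref{Hprop}. Next I would compute directly
\[
d(F\circ f)=F'(f)\,df,\qquad |d(F\circ f)|=|F'(f)|\,|df|,
\]
and hence
\[
\nu_{F\circ f}=\frac{F'(f)\,df}{|F'(f)|\,|df|}=\operatorname{sgn}(F'(f))\,\nu_f.
\]
Since $F$ is strictly monotone, $\operatorname{sgn}(F'(f))$ is constant on $M$, equal to $+1$ in the increasing case and $-1$ in the decreasing case.

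Finally, applying formula \eqref{firstH} to the slice function $F\circ f$ gives
\[
(n-1)H_{F\circ f}=\nabla_a\nu_{F\circ f}^{\,a}=\operatorname{sgn}(F'(f))\,\nabla_a\nu_f^{\,a}=\operatorname{sgn}(F'(f))\,(n-1)H_f,
\]
which is exactly the two claimed identities. The only delicate point is to make sure one is entitled to use the simplified expression \eqref{firstH} rather than the full \eqref{Hf-form}; this is justified because $\nu_{F\circ f}$ is unit on all of $M$, so the tangential term $\nu^a\nu^b\nabla_a\nu_b=\tfrac12\nabla_a(\nu^b\nu_b)\nu^a$ in \eqref{mean-ex} vanishes identically, not merely along a single leaf. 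No further obstacle arises; the lemma is essentially a restatement of the conformal-weight-zero character of the unit conormal under reparametrisation of $f$.
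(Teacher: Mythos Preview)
Your proof is correct and follows essentially the same route as the paper: the paper's argument is simply to note that $\nu_{F\circ f}=\pm\nu_f$ at every point (citing the computation in Lemma~\ref{slice-lemma}) and then to invoke that \eqref{Hf-form} arises from \eqref{mean-ex}. You have spelled this out with more care, in particular by justifying explicitly why the simplified divergence formula \eqref{firstH} applies globally, which the paper leaves implicit.
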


\begin{proof}
   The result is immediate from  the first part of  Lemma \ref{slice-lemma}. Or we may use from the proof there that
  $$
  \nu_{F\circ f}=\pm \nu_f  \qquad \mbox{at all }\qquad p\in N,
  $$
  and then that
Formula (\ref{Hf-form}) arises from expanding (\ref{mean-ex}).
  \end{proof}

\subsection{CMC slicings}

Some applications of the above are immediate.
\begin{prop}\label{cmc1}
Let $f:M\to \mathbb{R}$ be a slice function.
The level sets of $f$ give a CMC slicing iff
\begin{equation}\label{main}
 H_f=G\circ f
\end{equation}
for some
smooth function $G:\mathbb{R}\to \mathbb{R}$.
\end{prop}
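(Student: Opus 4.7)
The plan is to prove the two directions separately, treating $(\Leftarrow)$ as essentially immediate and concentrating the real work on showing, in $(\Rightarrow)$, that the value of $H_f$ on a leaf is itself a smooth function of $f$.

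For $(\Leftarrow)$, if $H_f=G\circ f$ for some smooth $G$, then on the level set $\{f=c\}$ one has $H_f\equiv G(c)$; in particular $H_f$ is constant on each leaf, so the slicing is CMC in the sense recorded just after Proposition \ref{Hprop}.

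For $(\Rightarrow)$, suppose the slicing is CMC, so that $H_f$ takes a single value on each leaf $\{f=c\}$. I would define $G:f(M)\to\mathbb{R}$ by $G(c):=H_f(p)$ for any $p$ with $f(p)=c$; the CMC hypothesis makes this unambiguous. To verify smoothness of $G$ on the open interval $f(M)\subset\mathbb{R}$ (open because $f$ is a submersion on account of $df$ being nowhere zero, and an interval because $M$ is connected), I would work in adapted local coordinates: since $df\neq 0$, around each $p_0\in M$ there is a chart $(U;x^1,\ldots,x^n)$ with $x^1=f|_U$. The function $H_f$ given by Proposition \ref{Hprop} is smooth on $U$ as a function of $(x^1,\ldots,x^n)$, and CMC forces it to be independent of $x^2,\ldots,x^n$, since those are coordinates along the leaf. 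Hence $H_f|_U$ is smoothly a function of $x^1=f$ alone, and this is precisely $G$ restricted to $f(U)$. These local representations agree on overlaps (both compute $H_f$ at a common point), and so they glue to a smooth function $G$ on $f(M)$; a standard Whitney-type extension off the open interval $f(M)$ then produces the required smooth $G:\mathbb{R}\to\mathbb{R}$.

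The main subtlety, and the step I expect to require the most care, is purely bookkeeping: a level set $\{f=c\}$ may have several connected components, and the definition of CMC slicing given just before Proposition \ref{cmc1} asks only that $H_f$ be constant on each individual component. For $H_f=G\circ f$ to hold pointwise one needs the common value to agree across components at the same $f$-value, which is a matter of reading the hypothesis of Proposition \ref{cmc1} (that the level sets of $f$ give a CMC slicing) as asserting the constancy of $H_f$ on each full level set, or equivalently of working locally. With that convention fixed, no further argument is needed beyond the adapted-coordinate computation above, and the extension of $G$ from $f(M)$ to all of $\mathbb{R}$ is routine.
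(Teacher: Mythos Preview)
Your proof is correct and follows essentially the same route as the paper: both directions are handled identically, with the $(\Rightarrow)$ argument using adapted coordinates $(x^1,\ldots,x^n)$ with $x^1=f$, observing that CMC forces $H_f$ to be independent of $x^2,\ldots,x^n$, and then patching the local functions of $x^1$ on overlaps. Your write-up is in fact slightly more careful than the paper's, in that you make explicit the extension of $G$ from the open interval $f(M)$ to all of $\mathbb{R}$ and flag the connected-components subtlety, neither of which the paper addresses.
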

\begin{proof}
  $\Leftarrow :$   $G\circ f$ is clearly constant on the level sets of $f$. Thus if \eqref{main} holds then those levels sets are CMC.\\

  $\Rightarrow :$ Consider a point $p\in M$ and the (hypersurface-)leaf of
  the slicing that contains $p$.  In a sufficiently small open
  neighbourhood of $p$, that is a local collar of the leaf containing $p$,  we can find coordinates
  $(x^1,\cdots, x^n)$  mapping the neighbourhood diffeomorphically onto the open  open set
  $U\times V\subset \mathbb{R}\times\mathbb{R}^{n-1}$ so that, on this neighbourhood, $f=x^1$.

  Suppose now the slicing is CMC. Then $H_f$ is independent of
  $(x^2,\cdots, x^n)$. So $H_f=G\circ x^1$, for some smooth function
  $G:U \to \R$ where $U\subset \R$. This establishes the claim
  locally.  Considering all points, $G$ must match on overlaps so the
  claimed result holds globally.
\end{proof}

Given a CMC
slicing, the PDE $H_f=G(f)$ does not necessarily determine $f$, even if
$G$ is specified.  For example if $H_f$ is $\operatorname{constant}$ then
$f$ is clearly not determined by (\ref{Hf-form}), and we see the
ambiguity arising as in Lemma \ref{invariance}: if $f$ is a solution
then so is $F\circ f$ for any smooth strictly increasing function $F$.

There are two cases where this problem is easily resolved, the first
of which is the generic setting.
\begin{definition}
  A CMC foliation is
           {\bf generic} if its mean curvature function $H$ satisfies
           that $dH$ is nowhere zero.
\end{definition}
\noindent If the mean
curvature function $H_f$ satisfies that $d H_f$ is nowhere then
we can remove the freedom described in Lemma \ref{invariance} and
normalise our choice of $f$ representing the slicing, as follows.
\begin{customthm}{{\bf 2}}

 \it{A  foliation is  generic CMC iff it admits a  slice function $f$ satisfying \eqref{CMCeq}
\[
(n-1)\lambda f= \frac{1}{| df|} \Delta f- \frac{1}{|df|^3}\cdot (\nabla^a f) (\nabla^b f)(\nabla_a\nabla _b f) ,
\]
where $\lambda$ is either $+1$ or $-1$.}
\end{customthm}

\begin{proof}
  $\Rightarrow$ Since the foliation is CMC, the mean curvature
  function $H$ is constant on the leaves.  Since $dH$ is nowhere zero,
  $H$ is a slice function. Thus the level sets of $H$ are the leaves
  of the foliation and $H$ is a slice function for the given
  foliation.

Thus the mean curvature function $H_H$ for the slicing defined by $H$ (viewed as a slice function) satisfies
  $H_H=\pm H$. That is $f:=H$
solves (\ref{CMCeq}). \\
$\Leftarrow$ Let $f$ be a slice function satisfying (\ref{CMCeq}).
The right hand side of (\ref{CMCeq}) is
  $(n-1)H_f$. Thus if (\ref{CMCeq}) holds then $H_f=\lambda f$, whence
  $d H_f$ is nowhere zero, and slicing is CMC by previous Proposition.
  \end{proof}

So generic CMC foliations are transversely orientable and are given by slice functions $f$ satisfying \eqref{the-eq}

\begin{equation*}
H_f= \lambda f .
\end{equation*}
It is evident from (\ref{CMCeq}) that $\lambda$ is
independent of the sign of $f$, so we have the following.
\begin{cor}\label{invts}
  (1) the sign $\lambda$ is an invariant of unoriented generic CMC slicings,\\
  (2) $f$ is a characterising invariant of transversely-oriented  generic CMC foliations.
  \end{cor}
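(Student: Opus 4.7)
The plan is to derive both claims from the characterisation of generic CMC foliations given by Theorem \ref{cmc}, together with Lemmas \ref{slice-lemma} and \ref{invariance}. The key observation to be exploited is that equation \eqref{CMCeq} is odd in $f$: substituting $-f$ for $f$ multiplies both sides by $-1$. This can be checked directly (every term on either side is homogeneous of odd degree in $f$ and its derivatives), or equivalently by invoking Lemma \ref{invariance} with the strictly decreasing $F(t)=-t$ to conclude $H_{-f}=-H_f$.

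I would treat (2) first, since (1) falls out quickly once uniqueness is in hand. Suppose $f$ and $f'$ are two slice functions, representing the same transversely-oriented foliation, that solve \eqref{CMCeq} with respective signs $\lambda$ and $\lambda'$. Setting $I:=f(M)$, Lemma \ref{slice-lemma} provides a strictly increasing smooth $F\colon I\to\mathbb{R}$ with $f'=F\circ f$, and Lemma \ref{invariance} gives $H_{f'}=H_f$. Rewriting each instance of \eqref{CMCeq} as $H_f=\lambda f$ and $H_{f'}=\lambda'f'$ and combining produces $\lambda'F(t)=\lambda t$ throughout $I$. Since $F$ is strictly increasing and $\lambda,\lambda'\in\{\pm 1\}$, this forces $\lambda=\lambda'$ and $F=\operatorname{id}_I$, so $f'=f$. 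Thus $f$ is uniquely determined by the transversely-oriented foliation.

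For (1), I would start from a slice function $f$ solving \eqref{CMCeq} with some sign $\lambda$, which exists by Theorem \ref{cmc}. Reversing the transverse orientation amounts, up to the uniqueness already established in (2), to replacing $f$ by $-f$; the oddness of \eqref{CMCeq} noted above then shows that $-f$ solves the equation with the \emph{same} sign $\lambda$. Hence $\lambda$ does not depend on the choice of transverse orientation, and part (2) rules out any remaining ambiguity within a fixed orientation, so $\lambda$ is an invariant of the underlying unoriented CMC slicing.

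I do not anticipate any serious obstacle; the argument is essentially bookkeeping once the oddness of \eqref{CMCeq} is noticed. The only care points are to restrict $F$ to the correct domain $I=f(M)$ when applying Lemma \ref{slice-lemma}, to keep the orientation conventions in Lemmas \ref{slice-lemma} and \ref{invariance} (strictly increasing versus strictly decreasing) aligned with the orientation induced by $df$ in \eqref{CMCeq}, and to keep separate the rôles of the reparametrisation $f\mapsto F\circ f$ (which preserves orientation) and the sign flip $f\mapsto -f$ (which reverses it).
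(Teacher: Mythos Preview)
Your proposal is correct and follows essentially the same line as the paper. The paper's proof is the single sentence preceding the corollary, namely that it is evident from \eqref{CMCeq} that $\lambda$ is independent of the sign of $f$; this is precisely your oddness observation. For part (2) the paper leaves the uniqueness argument implicit, whereas you spell it out carefully via Lemmas \ref{slice-lemma} and \ref{invariance}; your deduction that $\lambda' F(t)=\lambda t$ on $I$ forces $F=\operatorname{id}_I$ and $\lambda=\lambda'$ is the natural way to make this rigorous.
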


The next case is the opposite extreme to that just treated. Namely
that the mean curvature function is everywhere constant. For completeness, we observe here that, in this setting, to an extent we can normalise the slice function.
\begin{prop}\label{cmc2}
     Suppose that a  slicing, given by a defining function $\tilde{f}$, has
     $H_{\tilde{f}}=H$ is constant, so that $dH_{\tilde{f}}=0$ at all
     points. Suppose also that there is a smooth regular curve $\gamma$ on $M$ that is
     transverse to every leaf of the foliation.
     Then there is a unique slice function $f: M \to \R$ (up
     to adding a constant) satisfying the equation

    \begin{equation}\label{CMCC}
(n-1) H= \frac{1}{| df|} \Delta f- \frac{1}{|df|^3}\cdot (\nabla^a f) (\nabla^b f)(\nabla_a\nabla _b f) ,
\end{equation}
    and
$$
 |d(f\circ \gamma)|=1
$$
on the domain of $\gamma$ in $\mathbb{R}$.
\end{prop}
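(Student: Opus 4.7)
The plan is to exploit Lemma \ref{slice-lemma} and Lemma \ref{invariance} to reduce the problem to a scalar ODE for a reparametrisation, then use the transverse curve $\gamma$ to pin that reparametrisation down up to an additive constant.

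First I would observe that, by Lemma \ref{slice-lemma}, every slice function for the foliation determined by $\tilde f$ has the form $f = F\circ\tilde f$ with $F$ smooth and strictly monotone on the range $J:=\tilde f(M)\subset\mathbb{R}$. Equation \eqref{CMCC} says precisely that $H_f=H$, and Lemma \ref{invariance} then selects the sign: if $H\neq 0$ then $F$ must be strictly increasing (in the degenerate case $H=0$ both monotonicities satisfy the PDE, so one adopts an orientation convention, for instance that $f\circ\gamma$ be increasing on the parameter domain of $\gamma$).

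Next I would set $h:=\tilde f\circ\gamma$ on the parameter interval $I\subset\mathbb{R}$ of $\gamma$. Transversality of $\gamma$ to every leaf forces $h'(t)\neq 0$ for all $t\in I$, so $h$ is strictly monotone; and the fact that $\gamma$ meets every leaf gives $h(I)=J$. After possibly reversing the parameter of $\gamma$ we may take $h:I\to J$ to be a smooth diffeomorphism with $h'>0$. The normalisation $|d(f\circ\gamma)|=1$ then reads $|F'(h(t))\,h'(t)|=1$, and, with $F$ increasing, fixes
$$
F'(s)\;=\;\frac{1}{h'(h^{-1}(s))},\qquad s\in J.
$$
The right-hand side is a smooth positive function of $s\in J$, so integration produces $F$ uniquely up to an additive constant, and $f:=F\circ\tilde f$ is the desired slice function.

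For uniqueness I would run the construction in reverse: any other $f^\star$ satisfying \eqref{CMCC} together with $|d(f^\star\circ\gamma)|=1$ must, by the first paragraph, equal $F^\star\circ\tilde f$ for some strictly increasing smooth $F^\star$, and the same computation along $\gamma$ forces $(F^\star)'=F'$ on $J$; hence $f^\star = f + \mathrm{const}$. The genuine care points, which I regard as bookkeeping rather than substantive obstacles, are the orientation choice (the sign in $|F'h'|=1$ and the degenerate case $H=0$) and smoothness of $F$ near any endpoint of $J$; the latter is immediate from smoothness of $h$ together with the nowhere vanishing of $h'$.
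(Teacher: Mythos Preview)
Your proposal is correct and follows essentially the same route as the paper: parametrise all admissible slice functions as $F\circ\tilde f$ via Lemma~\ref{slice-lemma}, use Lemma~\ref{invariance} to fix the monotonicity of $F$, and then read off the normalisation $|d(f\circ\gamma)|=1$ as a first-order ODE for $F$ along $\gamma$ that determines $F$ up to an additive constant. Your treatment is in fact more explicit than the paper's (you write out $F'(s)=1/h'(h^{-1}(s))$, spell out the uniqueness argument, and flag the degenerate case $H=0$), but the underlying idea is identical.
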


\begin{proof}
Choose any slice function $f$ giving the slicing. Then using
Proposition \ref{Hprop}, and by a change of its sign (to $-f$) if
necessary, it then must satisfy equation (\ref{CMCC}). Using Lemma
\ref{invariance} it follows that we can replace $f$ with a function
$F(f)$ where $F'>0$, and so $|d(F\circ f)|=F'(f)|df|$. Fixing a choice
of smooth curve $\gamma$ that meets every leaf transversely. (It is
straightforward to see that such a curve always exists locally.) It is
elementary to see that we can solve $|d(F\circ f\circ \gamma) |=1 $
and then this determines $F:\R\to \R$, up to the addition of a
constant (function).
\end{proof}

Concerning the above theorem, on suitable local sets one can normalise
the choice of $\gamma$ by requiring it to be parametrised by arc
length and orthogonal to the leaves.  Then we can set $f$ to zero on a
nominated leaf. Thus it then depends on only the choice of a point on that
leaf (and the curve through that point arises by integrating the unit
normal field).

In summary any slicing with constant mean curvature function satisfies  the PDE (\ref{CMCC}) for some constant $H$, and then $f$ is determined up to the choice of $F$ as in Lemma \ref{invariance}, but can be normalised to the extent of this Proposition \ref{cmc2}.

\section{CMC hypersurface foliations - some comments} \label{CMCfol}

Here we make some comments concerning slicings that relate also to global
CMC foliations. First note that if a foliation is CMC then its mean
curvature function satisfies
  \begin{equation}\label{CMCeq-rat}
  (n-1)\lambda H|dH|^3 = |dH|^2 \Delta H-
    (\nabla^a H) (\nabla^b H)(\nabla_a\nabla _b H) ,
  \end{equation}
  locally at every point for some $\lambda = 1$, or $-1$.
 To see this we use that locally the foliation is given as a slicing  by some
  slice function $f$. Let us fix and work on an open set on which this holds.
  If $dH_f$ is zero
  at some point then, at that point, (\ref{CMCeq-rat}) is satisfied
  trivially. Otherwise if at some point $p$ we have $dH_f(p)\neq 0$
  then $dH_f$ is nowhere zero in an open set containing $p$ and then, in that open set,
  (\ref{CMCeq-rat}) is satisfied by the $\Rightarrow$ direction of
  part (1) of the Theorem \ref{cmc}.

One might hope to have a converse to the observation just made, that
is, that if (\ref{CMCeq-rat}) holds then we can conclude that the
foliation is CMC. Clearly if a slicing has $dH=0$ then it is CMC.  For
a slice function $f$ if (locally) $dH_f$ is non-vanishing and
satisfies (\ref{CMCeq-rat}) then $H_f$ is a slice function for a CMC
slicing. But we don't know that this is the slicing determined by $f$.
Stating this more formally, the problem is that if a foliation,
represented locally by $f$, has $H_f$ satisfying (\ref{CMCeq-rat})
then we do not know that, where $dH_f\neq 0$, the 1-forms $df$ and
$dH_f$ have the same annihilator.
It seems unlikely that a converse can be
established: We show that infinitesimally $H_f$ does not uniquely
determine $f$ up to
the ambiguity described in Lemma \ref{invariance}, even when $d H_f$
is nowhere vanishing. We see this by linearising the map $f\mapsto H_f$.

\begin{prop}\label{conversenonuniqueness}
The linearisation of the operator $f\mapsto H_f$ can have a greater that 1-dimensional kernel,
even at points $f$ where $dH_f$ is nowhere zero.
\end{prop}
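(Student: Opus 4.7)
The plan is to produce a concrete slice function $f$ where one can write down by inspection a variation $\dot f$ that is annihilated by the linearisation of $f\mapsto H_f$ but is not of the form $\dot f = \phi(f)$. Because the tangent directions $\{\phi\circ f\}$ to the reparametrization family of Lemma \ref{invariance} always lie in the kernel, exhibiting such a $\dot f$ shows the kernel is strictly larger than the reparametrization subspace (and in particular has dimension greater than one).

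First I would linearise $f\mapsto H_f$ in general. Writing $\nu^a = \nabla^a f/|df|$ and introducing the leaf-tangential projector $P^a{}_b = \delta^a_b - \nu^a\nu_b$, differentiating (\ref{mean-ex}) gives
\[
\dot\nu^a \;=\; \tfrac{1}{|df|}\,P^a{}_b\,\nabla^b\dot f,\qquad (n-1)\,\dot H_f \;=\; \nabla_a\dot\nu^a.
\]
Since $P^a{}_b\nabla^b f = 0$, every variation $\dot f = \phi(f)$ is annihilated, recovering Lemma \ref{invariance} infinitesimally. Observe that the operator sees $\dot f$ only through its leaf-tangential gradient.

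Next I would specialise to an example. Take $M$ to be a simply-connected open subset of an annulus in $\R^2$ with the flat metric, and set $f=r$. By Example \ref{two} we have $H_f = 1/r$, so $dH_f = -dr/r^2$ is nowhere zero on $M$. A short polar-coordinate computation of $\nabla_a\dot\nu^a$ reduces the formula above to
\[
\dot H_f \;=\; \tfrac{1}{r^2}\,\partial_\theta^2\dot f,
\]
so the local kernel is $\{A(r)+B(r)\theta : A,B\in C^\infty\}$. The variation $\dot f = \theta$ thus lies in the kernel while not being a function of $r$ alone, hence not an infinitesimal reparametrization. This proves the proposition.

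No substantive obstacle is expected. The step that requires some care is the polar-coordinate divergence in the example: one must handle the Christoffel symbols of $g=dr^2+r^2\,d\theta^2$, or equivalently use $\nabla_a\dot\nu^a = \tfrac{1}{r}\bigl(\partial_r(r\dot\nu^r)+\partial_\theta(r\dot\nu^\theta)\bigr)$. The same phenomenon occurs on $\R^n\setminus\{0\}$ with $f=r$: the linearisation becomes, up to a factor of $r^{-2}$, the round-sphere Laplacian of $\dot f$ on each leaf, whose kernel on each sphere is strictly larger than the leaf-constants as soon as one allows positive-degree spherical harmonics.
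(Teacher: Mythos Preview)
Your proposal is correct and takes essentially the same approach as the paper: both linearise $f\mapsto H_f$, take the circle foliation of (a simply-connected piece of) the punctured plane, and use the angular coordinate as the extra kernel direction. The only differences are cosmetic --- you use $f=r$ and polar coordinates (yielding the clean formula $\dot H_f=r^{-2}\partial_\theta^2\dot f$), whereas the paper uses $f=r^2$ and checks the same vanishing via a Cartesian Hessian computation of $u=\arctan(y/x)$; your remark on the $n$-dimensional generalisation via spherical harmonics is an observation not in the paper.
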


\begin{proof}

We start by linearising the operator $f\mapsto H_f$. Let us consider a
variation around a given $f:M\to \mathbb{R}$ using the variation
function $u:M\to \mathbb{R}$, with parameter $t$ (in some some open
interval containing zero), yielding $f_t:M \to \mathbb{R}$ with
$f_t=f+tu$.

Then one can compute the variation of the length as follows

\begin{align*}
\frac{d}{dt}\frac{1}{|df_t|}\Big|_{t=0}= -\frac{\IP{df}{du}}{|df|^3}.
\end{align*}

So the unit normal field generated by the level set function $f_t$ varies as

\begin{align*}
\frac{d}{dt}\frac{df_t}{|df_t|}\Big|_{t=0} = \frac{du}{|df|} - \frac{\IP{du}{df}}{|df|^3}df.
\end{align*}

While computing the variation through the operator we note that this is an ambient operator so the variation will only affect the normal $\nu_t=\frac{df_t}{|df_t|}$ in \eqref{firstH}.

\begin{align*}
\frac{d}{dt}\Big|_{t=0}H_{f_t}&=\frac{1}{n-1}\frac{d}{dt}\Big|_{t=0}\nabla_a \nu_t^a = \frac{1}{n-1}\nabla_a \frac{d}{dt}\Big|_{t=0}\nu_t^a\\
&= \frac{1}{n-1} \nabla_a \bigg(\frac{\nabla^a u}{|df|} - \frac{\nabla^b u \nabla_b f}{|df|^3}\nabla^a f\bigg).
\end{align*}
We want to consider variations of $f$ by variation functions  that cannot be
expressed as a composition of another function $F:\mathbb{R}\to
\mathbb{R}$ and $f$. For our purposes it will thus suffice to consider maps $
u:M\to\mathbb{R}$ such that $\IP{du}{df}=\nabla_a u\nabla^a f=0$.

Given that the second term inside the divergence then vanishes, the linearisation around $f$ above becomes
\begin{align*}
\frac{d}{dt}\Big|_{t=0}H_{f_t}&= \frac{1}{n-1} \bigg(\frac{\Delta u}{|df|} - \frac{\nabla^a u \nabla^b f \nabla_a\nabla_b f}{|df|^3}\bigg).
\end{align*}
Using orthogonality we can also move a derivative from the $f$ to $u$ in the last term to obtain the following.
\begin{align*}
\frac{d}{dt}\Big|_{t=0}H_{f_t}&= \frac{1}{n-1} \bigg(\frac{\Delta u}{|df|} + \frac{\nabla^a f \nabla^b f \nabla_a\nabla_b u}{|df|^3}\bigg).
\end{align*}

Now we find an example  for which we have
that the linearisation of  $f\to H_f$ has variations as above in its kernel.

Take $M=\mathbb{R}^2\setminus\{ (0,y)\mid y\in \mathbb{R}\}$; that is
two dimensional Euclidean space without the $y$-axis. We foliate this
space by circles (please see Example \ref{E2} below for more details)
using the level sets of the slice function $f:\mathbb{R}^2\setminus\{0\}\to
\mathbb{R}$
 $$
f(x,y)=(x)^2+(y)^2
$$
The derivative  of this function is given by $df=2(x \ y)$.

We take as variation function
$u:\mathbb{R}^2\setminus\{ (0,y)\mid y\in \mathbb{R}\}\to \mathbb{R}$,
which gives an angle parametrisation of the (part) circles foliating $M$,
 $$
u(x,y)=\arctan(\frac{y}{x}),
$$
which has the following gradient $du=(-\frac{y}{x^2+y^2}
\ \frac{x}{x^2+y^2})$. Then  the two derivatives are
orthogonal, $\IP{df}{du}=0$. To check that the infinitesimal variation of $H_f$ in
directions given by $u$ is zero we need to show that
\begin{align}
0 = \frac{d}{dt}\Big|_{t=0}H_{f_t}&= \frac{1}{n-1} \bigg(\frac{\Delta u}{|df|} + \frac{\nabla^a f \nabla^b f \nabla_a\nabla_b u}{|df|^3}\bigg).\label{toshow}
\end{align}
This can be easily verified by computing the Hessian of $u$:
\begin{align*}
\nabla^2 u = \begin{pmatrix}
\frac{2xy}{(x^2+y^2)^2} & \frac{y^2-x^2}{(x^2+y^2)^2}\\
 \frac{y^2-x^2}{(x^2+y^2)^2} & -\frac{2xy}{(x^2+y^2)^2}
\end{pmatrix}
\end{align*}
We first note that the function
$u$ is harmonic, that is $\Delta u=0$, and also that the second term
in \eqref{toshow} will vanish by computing
\begin{align*}
\nabla^a f \nabla^b f \nabla_a\nabla_b u =\frac{8x^3y+8(y^2-x^2)xy-8xy^3}{(x^2+y^2)^2} = 0.
\end{align*}

\end{proof}

\bigskip

Next some  comments in a different direction. While we should think of (\ref{the-eq}) as the canonical way to
describe a generic CMC slicing, there can be some gains from exploiting
Lemma \ref{invariance} to choose an alternative slice function -- in
particular to yield a function that smoothly extends to regions
where the mean curvature and slicing is singular.

Using Lemma \ref{invariance} and Proposition \ref{cmc1} we see
that we have  alternative generic CMC equations given by
(\ref{the-eq})
\begin{equation}\label{var-the-eq}
H_f= \lambda  G\circ f
\end{equation}
where $G:\R\to \R$ is any {\em fixed} smooth strictly monotonic function (because $G$ is fixed we require $\lambda$).

 Of course there is no gain if $df$ is
nowhere zero, as required of slice functions. But there can be if wish to
generalise our setting slightly.

\begin{example}\label{E2}
  In Euclidean space with the origin removed, $\mathbb{R}^n\setminus
  \{0\}$, we have the sphere foliation given by the level sets of
  $\tilde{f}:=\frac{1}{r}$ (which is the $H_f$ of Example \ref{two}) where $r=\sqrt{(x_1)^2+\cdots +(x_n)^2}$. Then $H_{\tilde{f}}=-\frac{1}{r}=-\tilde{f}$. So $\tilde{f}:=\frac{1}{r}$ solves
  (\ref{the-eq}) with $\lambda =-1$.

  But notice that if we take
  $f:=F\circ \tilde{f}=(H_f)^{-2} = (\tilde{f})^{-2}$ (so $F'<0$ on
  $\mathbb{R}_{\geq 0}$) then we come to $f= F\circ \tilde{f}=r^2$, as in Example \ref{two}. Then we have
  $$
H_f= G\circ f
$$
where $G(f):=f^{-\frac{1}{2}}$.

Now multiplying through by $|df|^3$  we come to
  $$
(n-1)|df|^3 f^{-\frac{1}{2}}= | df|^2 \Delta f-  (\nabla^a f) (\nabla^b f)(\nabla_a\nabla _b f) .
  $$
so that now this is satisfied by
  $$
f=(x_1)^2+\cdots +(x_n)^2
$$
on all of $\mathbb{R}^n$. Thus, in this example, there is an
advantage in using $f=r^2= H_{\tilde{f}}^{-2}$, rather than the mean curvature
itself, in that we obtain an equation with solution on $\mathbb{R}^n$
that extends the foliation equation and solution  on $\mathbb{R}^n\setminus \{0\}$ -- although now $f$ is only a slice function on the open dense set
$\mathbb{R}^n\setminus \{0\}$.
  \end{example}

\begin{example}\label{E3}
  Consider the $n$-sphere with its usual metric and standard CMC
  foliation by totally umbilic $(n-1)$-spheres. We will take intially
  $x_{n+1}$ as the slice function. Then the mean curvature $H$ is zero
  along the equator and is infinite at the poles.  In the standard
  $\mathbb{R}^{n+1}$ coordinates the sphere is the set where
  $x_1^2+\cdots +x_{n+1}^2=1$ and we may view $(x_1,\cdots , x_n)$ as
  coordinates on the (for example) $x_{n+1}>0$ part of the sphere
  (sometimes called Monge patch coordinates).  In these coordinates,
  and with a suitable choice of orientation, the mean curvature of a
  $n-1$-dimensional sphere in the $n$-dimensional sphere is given by
  $$\displaystyle
  H=-\frac{x_{n+1}}{\sqrt{1-x_{n+1}^2}}= - \frac{\sqrt{1-\sum_{i=1}^n
      x_i^2}}{\sqrt{\sum_{i=1}^n x_i^2}} .
  $$
  Note that the sign is due the choice of slice function.

    Setting $f:=G(H)=\frac{1}{1+H^2}$ we obtain that
  $f$ extends to be  a smooth function on the (entire) $n$-sphere (namely $f=x_1^2+\cdots +x_n^2$) with
  values in $[0,1]$ and solves
  $$
H_f=\mp H
  $$
away from the pole and the equator on the North and, respectively, Southern hemispheres. Note that $df$ is then zero at the equator ($df$ on sections of the sphere tangent bundle) and the poles.
That is
  $$
(n-1)|df|^3 (\frac{1}{f}-1)^{\frac{1}{2}}= | df|^2 \Delta f-  (\nabla^a f) (\nabla^b f)(\nabla_a\nabla _b f).
$$ holds away from the poles and the left hand side has a removable
singularity at the poles and so in this sense it holds globally.

\newcommand{\tf}{t}
In fact we can also use the height function $\tf=x_{n+1}$ itself, which is a slice
function on the sphere minus its poles (and is a Morse function). There  it is
$$
\tf= -\frac{H}{\sqrt{1+H^2}}.
$$
This satisfies
$$
-(n-1)\tf |d\tf|^3  = \sqrt{1-\tf^2}\left(| d\tf|^2 \Delta \tf-  (\nabla^a \tf) (\nabla^b \tf)(\nabla_a\nabla _b \tf)\right),
$$
away from the poles, and again the left hand side has a removable
singularity at the poles.
\end{example}

\section{Conformal aspects}\label{conf-sec}

Consider a fixed hypersurface $\Sigma$ in a Riemannian manifold $(M,g)$. We now
consider the implications of replacing our Remannian metric $g$ with a
conformally related metric $\widehat{g}=e^{2\om}g$, where $\om\in C^\infty
(M)$. For the mean curvature of $\Sigma$, it is straightforward to compute that for such conformally related metrics
we have
\begin{equation}\label{H-trans}
H^{\widehat{g}}=e^{-\om}(H^g+\nu^a\nabla_a \om ),
\end{equation}
where on the right hand side $\nu^a$ is the normal field, which is unit
with respect to $g$, (and $\nabla w=dw$, the exterior derivative of
$f$, and so is independent of any metric). Thus we have the following
result.

\begin{customprop}{{\bf 3}}
  \it{ Given a slicing $f$ on a Riemannian manifold
    $(M,g)$, there is locally a conformally related metric
    $\widehat{g}=e^{2\om}g$ so that the slicing is minimal with
  respect to $\widehat{g}$.

  This extends to a global result if the manifold is contractible or it is  a
  product compatible with the slicing.}
  \end{customprop}
\begin{proof}
Clearly there is no loss of generality by dealing with the case
that $M$  is connected, so we assume this.

Let $\ell$ denote a leaf of the slicing, and set $t=f+C$ where the constant $C$ is chosen so that $\ell$ is the  zero locus of $t$.

Now we have the vector field $\bar{\nu}_t:=g^{-1}(dt,\cdot)$ and this
determines a flow transverse to the leaves of the foliation. Locally, for an open set $U$ of the leaf, choose
coordinates $(x^1,\cdots, x^{n-1})$
on $\ell$ and extend these to
functions on a cylinder $\mathcal{C}=(-\epsilon,\epsilon) \times U\subset
M$
so that each function $x^i$ is constant along the
flow. Thus $\bar{\nu}_t\cdot x^i =0$, for each $i=1,\cdots ,n-1$.

Because $\bar{\nu}_t\cdot x^i =0$ there is a function $\alpha$ such that $\bar{\nu}=\alpha \frac{\partial}{\partial t}$. We compute
\begin{equation*}
\alpha=\bar{\nu}_t\cdot t = dt (\bar{\nu}_t) =\bar{\nu}^a dt_a = g^{ab} dt_a dt_b=g^{tt}.
\end{equation*}
In particular $\alpha>0$.

Thus in the coordinates $(t,x^1,\cdots, x^{n-1})$ we have
\begin{equation*}
\nu_t =\frac{\bar{\nu}}{|\bar{\nu}|}=\sqrt{g^{tt}}\partial_t.
\end{equation*}

In view of (\ref{H-trans}), to establish the proposition on the cylinder $\mathcal{C}$ it is necessary and sufficient  to solve the equation
$$
H^g+ \sqrt{g^{tt}}\frac{\partial}{\partial t} \om=0 .
$$
But, using the coordinates introduced, this is achieved by a direct integration
\begin{align*}
\om(t,x^1,\cdots , x^{n-1})&:=-\int_0^t\frac{1}{\sqrt{g^{ss}}} H^g(s,x^1,\cdots , x^{n-1}) ds \\
&= -\int_0^tH^g(s,x^1,\cdots , x^{n-1})\sqrt{g_{ss}} ds.
\end{align*}
This establishes the result locally. Note that $\sqrt{g_{ss}}ds$ is the measure induced on the flow lines from the ambient Riemannian metric.

We can extend to include the entire leaf $\ell$ as follows.
Working on any other open set of the leaf $\ell$ we can repeat. On
overlaps of open cylinders $\mathcal{C}_1$ and $\mathcal{C}_2$ the
respective solutions $\om_1$ and $\om_2$ agree up to the addition of a
function that is annihilated by $\nu_t$. But the function is zero as
$\om_1$ and $\om_2$ both vanish on the leaf $\ell$. Thus the problem
is solved in a neighbourhood of the leaf $\ell$.

Similarly for the solution $\om_3$ in a cylinder $\mathcal{C}_3$ based
around a nearby leaf, where say $t=t_0\in (-\epsilon,\epsilon)$. On
the overlap between this and $\mathcal{C}_1$ the solution must differ
from $\om_1$ by a function annihilated by $\nu_t$, and this can made
zero by requiring that, at $t=t_0$, $\om_3$ agrees with
$\om(t_0,x^1,\cdots , x^{n-1})$. These observations allow one to extend to a global result in the two cases mentioned.
\end{proof}

\begin{example}[CMC foliation of the sphere conformally deformed into a minimal foliation]
  Consider the $n$-sphere (minus its poles) with its usual metric
  $g_s$ and standard CMC foliation by totally umbilic $(n-1)$-spheres
  as in previous Example \ref{E3}. In the standard $\mathbb{R}^{n+1}$
  coordinates we again write the ``height'' $x_{n+1}$ as $t$.  In
  terms of $t\in(-1,1)$, the mean curvature of an $n-1$-dimensional
  sphere in the $n$-dimensional sphere is given by
  $$\displaystyle
  H^{g_s}=-\frac{t}{\sqrt{1-t^2}}.
  $$
  To conformally deform the current metric to a metric $\mathring{g}=e^{2\om}g_s$ such that the foliation becomes minimal we must find a conformal factor $\om$ such that

  $$
  0=H^{\mathring{g}}=e^{-\omega}(H^{g_s}+\nu^a\nabla_a \omega)\qquad \Leftrightarrow \qquad   0=H^{g_s}+\nu^a\nabla_a \omega .
  $$

  For the height function $t$ the derivative in the normal direction to the foliation is given by
  $$\nu^a\nabla_a \omega =\frac{1}{\sqrt{1-t^2}}\frac{d \omega}{dt}$$.

  Thus the equation is $\frac{d}{dt}\omega=t$ with e.g.\ the solution
  $\omega(t)=\frac{1}{2}t^2$. Thus with respect to the metric
  $\go=e^{t^2}g_s$  the foliation is
  minimal.
\end{example}

\medskip

 Next we use Proposition \ref{min-prop} to yield a simple approach to   a more general result.
\begin{customthm}{{\bf 4}}
  \it{ Let $f$ be a slice function on a  Riemannian
   manifold $(M,g)$.
    Locally,  we can conformally prescribe the mean curvature to be any smooth
    function.
That is, given a smooth
    function $h$ on $M$, there is a metric
   $\widehat{g}$ in the conformal class $[g]$ such that the equation
  \begin{equation*}
h=H^{\widehat{g}}_f
  \end{equation*}
  is satisfied.

This extends to a global result for smooth functions $h$ with compact
support if the manifold is contractible, or it is a product compatible
with the slicing.}
\end{customthm}

\begin{proof}
Working locally, using Proposition \ref{min-prop} there is a metric
$\mathring{g}$ conformally related to $g$ such that
$H^{\mathring{g}}=0$.  In view of (\ref{H-trans}) we need to solve for
$\omega$ the following equation $h=H^{\widehat{g}}=
e^{-\om}\nu^a\nabla_a \om=- \nu^a\nabla_a e^{-\om} $, where
$\widehat{g}=e^{2\omega}\mathring{g}$.

Set $t=f$. Define coordinates as in the proof of Proposition \eqref{min-prop}. We then have
$$
H^{\widehat{g}}= - \sqrt{\mathring{g}^{tt}} \frac{\partial}{\partial t}e^{-\om}.
$$

Thus, given an arbitrary smooth function $h:M\to \mathbb{R}$,
we want to solve

\begin{equation*}
\frac{\partial }{\partial t}e^{-\om} = - \frac{1}{\sqrt{\mathring{g}^{tt}}} h.
\end{equation*}

 So we want
$$
e^{-\om} - C(x_1,\ldots,x_{n-1}) = -\int_{t_0}^t  \frac{1}{\sqrt{\mathring{g}^{ss}}} h(s,x^1,\cdots , x^{n-1}) ds
$$
where $C(x_1,\ldots,x_{n-1})$ is a function of $(x_1,\ldots, x_{n-1})$ and $t=t_0$ is a leaf of the slicing.
Thus
$$
\om =-\log \big(~C- \int_{t_0}^t  \frac{1}{\sqrt{\mathring{g}^{ss}}} h(s,x^1,\cdots , x^{n-1}) ds~\big),$$
or,
$$
= -\log \big(~C- \int_{t_0}^t  \sqrt{\mathring{g}_{ss}} h(s,x^1,\cdots , x^{n-1}) ds~\big) ,
$$
solves   $H^{\widehat{g}} =h$ for any bounded function $h$ with compact support, since we may choose $C$ so that
 $C- \int^t_{t_0}  \sqrt{g_{ss}}h $ is positive.

This solves the problem locally. For global the statement, given the
assumptions we can access $\go$ globally, from Proposition \ref{min-prop}.
Then arguing as in the Proof of Proposition \ref{min-prop} we can find and
match the choices of $C$ as, given that $h$ has compact support, the subset of $\mathbb{R}$ formed by the collection of
integrals $ \int_{flowline} h $ (over each flow line) is bounded above.
\end{proof}

\noindent From the proof we see that the Theorem holds for a much
larger class of functions $h$ than the set of those  with
compact support. We really just need that $h$ is properly integrable
on each connected flow line of the vector field
$\mathring{g}^{-1}(df,\cdot)$, and the set of such integrals is
bounded above.

\begin{remark} The proof above employs a
similar idea to that used to prove Theorem $1.(ii)$ of
\cite{walczak1984mean}.  In that source the author uses a conformal
rescaling combined with a scaling of a single metric component, so overall a
change of metric that is not purely conformal. The combination enables a stronger result in terms of what functions may be prescribed. Here we focus on what can be attained conformally.
\end{remark}

As a particular application of Theorem \ref{conf-presc}, and its
proof, we see that, given a Riemannian hypersurface slicing, there is
a conformally related metric that makes this CMC, at least
locally. This follows by setting $h$, in (\ref{p-eq}), to be a real-valued function $G$ composed with the slice
function.  We state this formally as follows.
\begin{cor} \label{c-solve}
  Let $f$ be a slicing on a Riemannian manifold $(M,g)$, satisfying the conditions of Proposition \ref{min-prop}, and $G$ any smooth function $G:\mathbb{R}\to \mathbb{R}$. Suppose that
       $\int G \circ f$ is integrable on each connected flow line of the vector field $\mathring{g}^{-1}(df,\cdot)$, and the set of such integrals is bounded above, then
    there is a metric $\widehat{g}\in [g]$ such that
   (\ref{main}) is satisfied, that is
 $$
H^{\widehat{g}}_f =G\circ f.
$$
So the slicing determined by $f$ is CMC for the metric
$\widehat{g}\in [g]$.

 In particular given $\lambda:=\pm 1$ and specialising to   $G(f):=\lambda f$ (with the same assumptions on the flow line integrals)
 there is $\widehat{g}$, conformally related to $g$, such that
  $$
H^{\widehat{g}}_f =\lambda f .
$$
So the slicing determined by $f$ is generic CMC for the metric
$\widehat{g}\in [g]$ and $H_f$ solves (\ref{CMCeq}).
  \end{cor}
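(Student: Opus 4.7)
The plan is to reduce the corollary to a direct application of Theorem \ref{conf-presc}, followed by invoking Proposition \ref{cmc1} to interpret the result as a CMC statement. First, given the slice function $f$ and the smooth function $G \colon \R \to \R$, set $h := G \circ f$; this is a smooth function on $M$. Under the conditions inherited from Proposition \ref{min-prop}, Theorem \ref{conf-presc} yields a metric $\mathring{g} \in [g]$ with respect to which the slicing is minimal, and then the proof of Theorem \ref{conf-presc} constructs $\widehat{g} = e^{2\om}\mathring{g}$ by choosing $\om$ to satisfy $\frac{\partial}{\partial t} e^{-\om} = -\mathring{g}^{tt\,1/2}\, h$ along each flow line of $\mathring{g}^{-1}(df,\cdot)$. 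The only requirement for this to globalise is that the integrated right-hand side be uniformly bounded above along flow lines, so one can choose a single constant $C$ large enough that the argument of $\log$ remains positive everywhere, and this is exactly the hypothesis imposed on $\int G \circ f$ in the statement.

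Second, once such a $\widehat{g}$ is produced, we have $H^{\widehat{g}}_f = h = G \circ f$ on all of $M$. Since $G \circ f$ is by construction constant on each level set of $f$, the slicing is CMC with respect to $\widehat{g}$ by the $\Leftarrow$ direction of Proposition \ref{cmc1}. This gives the first displayed conclusion.

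For the specialisation $G(s) := \lambda s$ with $\lambda = \pm 1$, the equation $H^{\widehat{g}}_f = \lambda f$ is exactly \eqref{the-eq} with respect to $\widehat{g}$. Expanding $H^{\widehat{g}}_f$ via Proposition \ref{Hprop} applied in the metric $\widehat{g}$ (so all covariant derivatives, Laplacian and pointwise norm are computed with $\widehat{g}$) recovers \eqref{CMCeq}. Moreover, since $f$ is a slice function, $df$ is nowhere zero, hence $dH^{\widehat{g}}_f = \lambda\, df$ is nowhere zero, so the CMC slicing is generic in the sense of Theorem \ref{cmc}.

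The only real obstacle is the global matching of the local conformal factors, which is what the integrability-and-bounded-above hypothesis is designed to address; all other steps are essentially invocations of the previous results. If one is content with a local statement, the hypothesis on flow-line integrals can be dropped entirely, and the local case of Theorem \ref{conf-presc} gives the conclusion without further effort.
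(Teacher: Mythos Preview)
Your argument is correct and matches the paper's: the corollary is stated there precisely as the specialisation $h:=G\circ f$ in Theorem~\ref{conf-presc} (and its proof), with Proposition~\ref{cmc1} then giving the CMC interpretation. One small slip: the ODE from the proof of Theorem~\ref{conf-presc} is $\partial_t e^{-\om}=-(\mathring{g}^{tt})^{-1/2}h=-\sqrt{\mathring{g}_{tt}}\,h$, not $-(\mathring{g}^{tt})^{1/2}h$; this does not affect your reasoning but should be corrected.
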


\begin{remark}
  Note that this result proves that for the  $\lambda$ defined in
  Theorem \ref{cmc} (and cf. Corollary \ref{invts}) both signs may arise.
  See Example \ref{mintoCMC} for an explicit example. Once again the assumptions of the theorem could be reduced if we allow also non-conformal changes of metric, as in e.g.\ \cite{walczak1984mean}.
\end{remark}

\begin{example}[Euclidean to Hyperbolic space - Half plane model]
  \label{Eucl-half}
  Consider the upper half space in Euclidean $\R^3$ foliated with minimal planes
  $\{z=c\}$ for $c\in[0,\infty)$.  The metric of the hyperbolic
    half-space model is
    $g_h=\frac{dx^2+dy^2+dz^2}{z^2}$ where the Euclidean metric is
    $g_E=dx^2+dy^2+dz^2$. The conformal factor  then is
    $e^{2\om}=\frac{1}{z^2}$ giving $\om(z)=-\log z$.

Using this conformal factor the foliation of the Euclidean space by minimal surfaces is transformed into a non-minimal CMC one for the hyperbolic half space:
$$
  H^{g_H}=e^{-\omega}(H^{g_E}+\nu^a\nabla_a \omega)=e^{-\omega}\nu^a\nabla_a \omega = z \cdot \frac{d}{dz}(-\log z)=-1.
  $$

\end{example}

\begin{example}[Euclidean to Hyperbolic space - Poincar\'e disk model]\label{Poin-disk}
Following from the previous example we consider the Poincar\'e Disk
model of the hyperbolic space. The metric is here
$g_H=\frac{4g_E}{(1-r^2)^2}$ where, again, $g_E=dx^2+dy^2+dz^2$ denotes the
Euclidean metric. Here $r^2=x^2+y^2+z^2$.

The conformal factor between these two metrics is
$\om(r)=\log\big(\frac{4}{(1-r^2)^2}\big)^{\frac{1}{2}}$. The CMC
foliation of the Euclidean half space changes to CMC one of the
Poincar\'e Disk as
\begin{align*}
H^{g_H}&=e^{-\omega}(H^{g_E}+\nu^a\nabla_a \omega)=e^{-\omega}(\frac{1}{r}+\nu^a\nabla_a \omega)\\
&= -\frac{1-r^2}{2} \Big(\frac{1}{r}+ \frac{d}{dr}\bigg(\log\big(\frac{4}{(1-r^2)^2}\big)^{\frac{1}{2}}\bigg)\Big)=-\frac{r^2+1}{2r}.
\end{align*}

\end{example}

\section{Conformally compact manifolds} \label{ccpct}

Let $\ol{M}$ be a compact $n$-manifold with boundary, and write $M$
for the interior. So $\ol{M}=M\cup \partial M$ where the boundary
$\partial{M}$ is a smooth closed $(n-1)$-manifold. Recall that a metric $\gp$ on
$M$ is said to be conformally compact if the following holds:
there is a metric $g$ on $\ol{M}$ such that, in some collar
neighbourhood of $\partial M$, we have $g=r^2 \gp$ with $r$ a slice
function such that its zero locus is exactly $\partial M$, i.e.,
$$
\partial M = \mathcal{Z}(r).
$$

Recall also that a slice function $r$ is said to be defining for
the boundary $\partial M$ if this last property holds, that is
$\partial M=r^{-1}(0)$ and we say that we say that $\gp$ is {
  asymptotically hyperbolic (AH)} if $|dr|_g=1$ {\em along} $\partial
M$.

For AH manifolds there is a useful slice function $r$, defined near
$\partial M$, and that is defining for $\partial M$, due to Graham-Lee
\cite{graham1991einstein}. Using this and after a geometric
identification of the collar neighbourhood with a product $\partial
M\times [0,\epsilon)$ (for some $\epsilon >0$), the conformally
  compact metric takes the form
\begin{equation}\label{GL-form}
\gp= \frac{g}{r^2}=\frac{h_r+ dr^2}{r^2} ,
\end{equation}
where $h_r$ is a 1-parameter family of metrics on $\partial M$. In this case $|dr|^2_g=1$, in the given neighbourhood.

As mentioned in the introduction, another useful class of boundary defining
functions would be provided by a positive answer to the following
question:\\
\noindent {\bf Question:} Given $\gp$, is there a defining slice
function $\rho$ so that
$$
g:=\rho^2\gp
$$
is a metric to the boundary and the level sets of $\rho$ are CMC for $g$?

Proposition \ref{step} shows there are many conformally compact
metrics that do admit a positive answer: for any conformal manifold with
boundary there are conformally compact metrics $\gp$ on the interior and
defining function metric pairs $(g,\rho)$ so that $\rho^2 \gp=g$ and
$$
H^g_\rho=\rho.
$$

\bigskip

\begin{proof}[Proof of Proposition \ref{step}]
Let $\tilde{g}\in \cc$.  Take a slice function $\rho$ that defines
$\partial M$. Since the
boundary is compact there is an open collar neighbourhood of $\partial
M$ on which $H_\rho^{\tilde{g}}$ is bounded, and thus also an open collar
neighbourhood on which the conditions of Corollary \ref{c-solve} are
satisfied for the case that $G$ is the identity function.
Now we use Corollary
\ref{c-solve} to conformally transform $\tilde{g}$ to $g=e^{2\om}
\tilde{g}$ so that, with respect to $g$, and on this collar, the slicing has mean curvature
$H_\rho^g=\rho$. We smoothly extend this conformal factor to $M$. Finally we define
$$
\gp:=\frac{g}{\rho^2}
$$
on the interior $M$ (near $\partial M$).
\end{proof}

Given a metric $g$ and CMC slicing, with slice function $\rho$, it is not
necessarily the case that $|d\rho|_g $ is constant along the zero locus
of $\rho$. This follows at once from the coordinate dependence of $C$
in the proof of Theorem \ref{conf-presc}, which may be used to
construct examples.

The proof of Proposition \ref{step} thus constructs conformally
compact metrics $\gp= \frac{g}{\rho^2}$, with a slicing $\rho$ that is CMC
for $g$, which are not in general AH; the examples so constructed are
asymptotically hyperbolic if and only if $|d\rho|_g$ is constant along
the $\rho =0$ leaf. By convention it is often required that
$|d\rho|_g=1$ along the boundary (which is the zero locus of
$\rho$). This means that asymptotically the sectional curvatures of
$\gp$ approach $-1$.

In constrast we note the following.
\begin{prop}\label{cfMPprop}
If $(M,\gp)$ is
conformally compact and $\rho$ is a smooth defining slice function
that is CMC for $\gp$, then  $\gp$ is asymptotically hyperbolic.
  \end{prop}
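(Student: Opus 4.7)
My plan is to apply the conformal transformation law \eqref{H-trans} to the rescaling $g = \rho^2 \gp$, extract a boundary identity, and then use the CMC hypothesis to force constancy of $|d\rho|_g|_{\partial M}$.

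Applying \eqref{H-trans} with base metric $\gp$, conformal factor $\om = \log \rho$, and $\widehat{g} = g$: the $\gp$-unit conormal to the slicing is $\nu_{\gp} = d\rho/|d\rho|_{\gp}$, and since $|d\rho|_g = |d\rho|_{\gp}/\rho$ one computes
\[
\nu^a_{\gp} \nabla_a \log \rho = \frac{|d\rho|_{\gp}}{\rho} = |d\rho|_g.
\]
Hence \eqref{H-trans} rearranges, on the interior $M$, to
\[
H^{\gp}_\rho = \rho H^g_\rho - |d\rho|_g .
\]
The right-hand side extends smoothly to $\ol{M}$ since $g$ is a metric to the boundary and $\rho$ has nonvanishing gradient along $\partial M$, so $H^{\gp}_\rho$ itself admits a smooth extension to $\ol{M}$ via this identity.

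Next, the CMC hypothesis forces $H^{\gp}_\rho$ to be constant on each leaf $\{\rho = c\}$, so in a collar of $\partial M$ one has $H^{\gp}_\rho = G(\rho)$ for some function $G$ of $\rho$ alone. Substituting into the identity and letting $\rho \to 0^+$ yields $G(0^+) = -|d\rho|_g|_{\partial M}$. Since the left-hand side is a single number while the right-hand side is \emph{a priori} a function on $\partial M$, the latter is constant along the boundary. The value $|d\rho|_g|_{\partial M}$ is an invariant of $\gp$ (a short check shows it is unchanged if $\rho$ is replaced by $f\rho$ for smooth positive $f$), so a constant rescaling of $\gp$ normalising this value to $1$ yields the AH property in the paper's sense.

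The only subtle point I foresee is that $\om = \log \rho$ blows up at $\partial M$, so \eqref{H-trans} is strictly valid only on the interior $M$. However, the derived identity has a smooth right-hand side on $\ol{M}$, which both legitimises the smooth extension of $H^{\gp}_\rho$ to the boundary and the limit $\rho \to 0^+$ used in the second paragraph. No deeper obstacle is anticipated.
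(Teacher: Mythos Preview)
Your proof is correct and follows essentially the same route as the paper: both derive the identity $H^{\gp}_\rho = \rho H^g_\rho - |d\rho|_g$ via the conformal change formula \eqref{H-trans}, observe that the right-hand side is smooth to $\partial M$, and then use the CMC hypothesis (so $H^{\gp}_\rho = G(\rho)$) together with the limit $\rho\to 0$ to conclude that $|d\rho|_g$ is constant along $\partial M$. The paper additionally introduces product coordinates to identify $\nu$, but this is cosmetic; your direct computation of $\nu_{\gp}^a\nabla_a\log\rho$ accomplishes the same thing.
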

\begin{proof}
  Since $\rho$ is defining for the conformally compact structure this means that
  $$
\gp =\frac{g}{\rho^2}
$$
where $g$ is a smooth metric to the the boundary $\partial M$ of $M$ in $\ol{M}$.
Using product coordinates so that  $\partial_\rho$ is orthogonal to the boundary (cf.\ the proof of Proposition \ref{min-prop}),  we have $\nu=\sqrt{g^{\rho\rho}}\partial_\rho$. Thus using equation \eqref{H-trans} we compute
\begin{equation}\label{relate}
H^{\gp}_\rho=\rho H^g_\rho-|d \rho|_g .
\end{equation}
This shows that, although $g^+$ is singular at $\partial M$,
$H^{\gp}_\rho$ extends smoothly to the zero locus of $\rho$, as $H^g_
\rho$ is smooth to the boundary. So  if $\rho$ is CMC for $\gp$ then
$H^{\gp}_\rho =F\circ \rho$ for some smooth real valued function $F$ of one variable. Thus $H^{\gp}_0: =\lim_{\rho\to 0} H^{\gp}_\rho=F(0)$ is
locally constant along the boundary. But $ H^{\gp}_0= -|d \rho|_g|_{\partial
  M}$, and so $\gp$ is necessarily asymptotically hyperbolic. (As $\rho$
is defining, $|d \rho|_g$ is nowhere 0 on $\partial M$).
  \end{proof}

\begin{remark}\label{cfMP}
  Much more can be read off from equation \eqref{relate}. First we
  observed above that $H^{g^+}_\rho$ extends smoothly to the
  boundary. It is well known that also the scalar curvature ${\rm
    Sc}^{g^+}$, of a conformally compact metric $g^+$, extends smoothly to the boundary, and its
  limit there satisfies ${\rm Sc}^{g^+}_0=-n(n-1)|d\rho|^2_g$, see
  e.g.\ \cite{mazzeo1988hodge,gover2010almost}. Thus from equation \eqref{relate} we have
  $$
H^{g^+}_0= -\sqrt{- \frac{{\rm Sc}^{g^+}_0 }{n(n-1)} },
  $$
on conformally compact manifolds.

 Next it
is evident from equation \eqref{relate} that if the slicing by $\rho$ is CMC for $g$ then it is
also CMC for $\gp$, and vice versa, if and only if $|d\rho|_g$ is
constant along all the leaves of $\rho$ (in other words $|d\rho|_g=
F\circ \rho$ for some real valued function of one variable $F$).

In another direction if $r^2\gp=g$, where $r$ satisfies
\eqref{GL-form}, then $|dr^2|_g=1$ and it is well known that $H^g_{r=0}=0$.
Thus it follows from \eqref{relate} that
$H^{\gp}_r$ is asymptotically CMC in that
$$
H^{\gp}_r = 1+O(r^2).
$$

Proposition \ref{cfMPprop} and the discussion here show that the
problems we study in this section are not simply related to that of
\cite{mazzeo2011constant}.
\end{remark}

\noindent Proposition \ref{cfMPprop} and the observations made in the Remark
 are also easily shown
using the tractor calculus for conformally compact manifolds as
developed in
\cite{gover2010almost,arias2021conformal,curry2023conformal}. It seems likely that these tools can also provide further geometric insights into  the results of
\cite{mazzeo2011constant}.

\begin{remark}
More generally we can use Theorem \ref{conf-presc} to obtain $g\in
\cc$ and $\gp=\frac{g}{\rho^2}$ so that $H^g$ is almost any desired
function. Or we could use Proposition \ref{min-prop} to achieve that
$H^g_{\rho}=0$ for the $\rho$-slicing.

\end{remark}

\noindent The proof of Proposition \ref{step} depends on a particular choice of
slice function, which then determines $\gp\in \cc|_{M}$.  Thus it does
not lead to an answer to the Problem \ref{question}.

\bigskip

\bigskip

An ideal problem to solve would be the following.
\begin{problem}\label{bigburrito}
  Let $G:[0,\infty) \to \mathbb{R}$ be any smooth function.  Consider a compact $n$-manifold with boundary
    $\ol{M}$ and a conformally compact metric $\gp$ on the interior
    $M$. There is a slice function $\rho$ for the boundary $\partial
    M$ such that, with respect to $g =\rho^2 \gp $ and in a collar neighbourhood of $\partial M$, the foliation
    determined by $\rho$ is CMC for $g$, with $H^{g}_\rho=G\circ \rho$. In particular there is $g$ on $\ol{M}$ in the conformal class so that
  $$
\rho^2 \gp=g \qquad \mbox{and} \qquad H_\rho^g=\rho.
  $$
\end{problem}

A simple setting where we can solve a case of Problem \ref{bigburrito}
(and indeed the variant where we ask $H_\rho^g=-\rho$) is for the
upper half space realisation of the Hyperbolic metric. For simplicity
 we work in dimension 3.

\begin{example}\label{mintoCMC}
Let $\gp=g_H$ denote the hyperbolic metric in the upper half space $\mathcal{H}^3$ of $\R^3$. Then we have $\gp=\frac{g_E}{z^2}$ where $g_E$ is the Euclidean metric. Also $H_z^{g_E}=0$, i.e. the $z=c$ slices are minimal with respect to the Euclidean metric $g_E$ that goes to the boundary.

Now we seek a defining slice function $\rho\in \mathcal{C}^\infty$
such that, at least in a collar neighbourhood of the boundary, we have
$H_\rho^g=\rho$ where $g=\rho^2 \gp$.

This means we want $\om\in \mathcal{C}^\infty(\overline{\mathcal{H}}^3)$ solving

\begin{equation*}
\rho:=e^{\omega} \cdot z=H^g_\rho= e^{-\om} \nu^a \nabla_a \om,
\end{equation*}
where as usual $\nu$ is the unit normal to the slicing with respect to the Euclidean metric.

By the symmetry of Hyperbolic space and its compactification in the upper half space model we expect that $\om$ and hence $\rho$ can be taken to be functions of $z$ alone and independent of the other coordinates. In this case $\nu$ will necessarily be proportional to $\partial_z$ and in fact $\frac{\partial}{\partial z}$ is an inner pointing unit normal thus the problem boils down to solving

$$
e^\om z =e^{-\om} \frac{d}{dz}\om,
$$

where $\om=\om(z)$.

Hence $\om(z)=-\frac{1}{2}\log (c-z^2)$ for a suitable constant $c>0$, which gives $\rho(z)=\frac{z}{\sqrt{c-z^2}}$ and $g=\frac{1}{c-z^2}g_E$.

This solves $H^g_\rho=\rho$, so \eqref{the-eq} with $\lambda=1$, on
$\ol{\mathcal{H}}^3$ with $\gp=\frac{g}{\rho^2}$ on $\mathcal{H}^3$.

Similarly if we set $\om=-\frac{1}{c}\log (z^2+c)$ for some constant, so that
$$
\tilde{g}=\frac{1}{z^2+c}g_E
$$
then we get
$$
H^{\tilde{g}}_\rho=-\rho ,
$$
where $z^2+c$ is nowhere zero, and so solves \eqref{the-eq} now with $\lambda =-1$.
\end{example}

\medskip

In the following for simplicity we restrict to the case of AH metrics $\gp$.
Taking $G$ to be the zero function gives a special case of Problem
\ref{bigburrito}, namely that of foliation by minimal
hypersurfaces. For clarity we state that separately and approach it first.
\newcommand{\br}{\bar{r}}
\begin{problem}\label{minimal}
 Consider a compact $n$-manifold with boundary
    $\ol{M}$ and an AH metric $\gp$ on the interior
    $M$. There is a defining slice function $\br$ for the boundary $\partial
    M$ such that, with respect to $g =\br^2 \gp $ and  in a collar neighbourhood of $\partial M$, the foliation
    determined by $\br$ is minimal, that is
    $$
    H^{g}_{\br}=0 .
    $$
      \end{problem}

Before we attack this,  let us set up some formulas and notation. First, given
a fixed AH metric $\gp$, we have that $g$ is determined by $r$ via $g=r^2\gp$, thus we often drop it in the notation for the mean curvature:
$$
H_r:=H^{g}_r.
$$
~\\

Our strategy will be as follows. Let $(M,\gp)$ be the AH
metric. Choose a metric $g_{\partial M}$ from the conformal class, and
$r$ a corresponding slice function  so that
$$
g=r^2 \gp
$$ is a metric to the boundary $\partial M=\mathcal{Z}(r)$, with $g_{\partial M}$ the boundary metric induced by $g$.  We attempt to construct a formal solution on the boundary for a new slice function
$$
\bar{r}=e^\omega r
$$
satisfying
\begin{equation}\label{eqn}
H_{\bar{r}}= 0,
\end{equation}
where  $H_{\bar{r}}=H_{\bar{r}}^{\bar{g}}$ with $\bar{g}= \bar{r}^2 \gp$. Note that we have $\bar{g}=e^{2\omega} g$.

~\\ Recall $ (n-1)
H^{g}_r=g^{ab}\nabla^g_a \nu_b , $ where $\nu= dr/|dr|_g$.  With $g$
and $r$ fixed, we need to write (\ref{eqn}) as a PDE on $\omega$. We
have
$$
(n-1) H^{\bar{g}}_{\bar{r}}=\bar{g}^{ab}\nabla^{\bar{g}}_a \bar{\nu}_b ,
  $$
  where
  $$
\bar{\nu}_b = \frac{\bar{r}_b}{\sqrt{\bar{g}^{cd} \bar{r}_c\bar{r}_d}} \qquad \mbox \qquad \bar{r}_b:=(d\bar{r})_b
~.   $$
So (recall (\ref{Hf-form}) from Proposition \ref{Hprop})
 \begin{equation}\label{Hr-form}
    (n-1) H_{\bar{r}}^{\bar{g}}= \frac{1}{| d\bar{r}|_{\bar{g}}} \Delta^{\bar{g}} \bar{r}- \frac{1}{|d \bar{r}|_{\bar{g}}^3}\cdot \bar{r}^a \bar{r}^b(\nabla^{\bar{g}}_a \bar{r}_b).
 \end{equation}

Thus we want to find $\om\in C^\infty (M)$ so that
\begin{equation}\label{min-ver}
 (n-1)| d\bar{r}|^3_{\bar{g}} H_{\bar{r}}=| d\bar{r}|^2_{\bar{g}} \Delta^{\bar{g}} \bar{r}- \bar{r}^a \bar{r}^b(\nabla^{\bar{g}}_a \bar{r}_b),
\end{equation}
vanishes (at least formally).

Now $\bar{r}=e^\omega r$ means that
$$
d \bar{r} = e^\omega d r + e^\omega r d\omega = \bar{r}d \omega +e^\omega d r .
$$
Or
$$
\bar{r}_b = \bar{r} \omega_b+ e^\omega r_b  = e^\om r \omega_b+ e^\omega r_b  ,
$$
whence
$$
\bar{g}^{cd}\bar{r}_c\bar{r}_d = e^{-2\om} g^{cd}( e^\om r \omega_c+ e^\omega r_c)( e^\om r \omega_d+ e^\omega r_d) = g^{cd}(  r \omega_c+  r_c)(  r \omega_d+  r_d)  .
$$
So
   $$
\bar{g}^{cd}\bar{r}_c\bar{r}_d = g^{cd} r_cr_d + 2 r g^{cd} r_c\omega_d + r^2g^{cd}\om_c \om_d .
$$

Next we need $\nabla^{\bar{g}}_a \bar{r}_b$. We have
$$
\nabla^{\bar{g}}_a \bar{r}_b = \nabla^{\bar{g}}_a ( e^\om( r \om_b +  r_b ) )= e^\om(\om_a(r \om_b +  r_b) +r_a \om_b +r\nabla^{\bar{g}}_a  \om_b + \nabla^{\bar{g}}_a r_b ).
$$
Now
$$
\nabla^{\bar{g}}_a  r_b = \nabla^g_a r_b -\om_a r_b - \om_b r_a +g_{ab} r_c\om^c
\quad\mbox{and}\quad
\nabla^{\bar{g}}_a  \om_b = \nabla^g_a \om_b -\om_a \om_b - \om_b \om_a +g_{ab}\om_c\om^c.
$$
So
$$
\nabla^{\bar{g}}_a \bar{r}_b = e^\om(\om_a(r \om_b + r_b) +r_a
\om_b + r(\nabla^g_a \om_b -2\om_a \om_b
+g_{ab}\om_c\om^c) + \nabla^g_a r_b -\om_a r_b - \om_b r_a +g_{ab}
r_c\om^c).
$$
This simplifies to
$$
\nabla^{\bar{g}}_a \bar{r}_b  = e^{\om}(\nabla^g_a r_b +r \nabla^g_a \om_b +  g_{ab} r_c\om^c -r\om_a\om_b + r g_{ab}\om_c\om^c ).
$$
 Contracting with $\ol{g}^{ab}$, we obtain
$$
\Delta^{\bar{g}} \bar{r} =e^{-\om}(\Delta^g r +r \Delta^g \om + n r^c\om_c +(n-1)r\om_c\om^c ).
$$

Now our expression (\ref{min-ver}) is
\begin{equation}\label{exp-min-ver}
(n-1)| d\bar{r}|^3_{\bar{g}} H_{\bar{r}}=\bar{r}^a \bar{r}^b (\bar{g}_{ab}  \Delta^{\bar{g}} \bar{r}- \nabla^{\bar{g}}_a \bar{r}_b).
\end{equation}
Substituting all above computed quantities gives us
\begin{align*}
 (n-1)| d\bar{r}|^3_{\bar{g}} H_{\bar{r}}^{\bar{g}}e^{\omega} =(r \omega^a+  r^a)(r \omega^b+  r^b) & [ g_{ab}(\Delta^g r +r \Delta^g \om + n r^c\om_c +(n-1)r\om_c\om^c ) \\
 & - (\nabla^g_a r_b +r \nabla^g_a \om_b +  g_{ab} r_c\om^c -r\om_a\om_b + r g_{ab}\om_c\om^c ) ]
\end{align*}

We can expand the above into
\begin{align}
\label{exp-min-ver-expanded}
(n-1)|d\bar{r}|^3_{\bar{g}} H_{\bar{r}}e^{\omega}&=(n-1)|dr|^3H_r+(n-1)|dr|^2 r^c\omega_c\\
&+r\big(|dr|^2\Delta \omega- r^ar^b\nabla_a\nabla_b \omega+2\omega^ar^b(g_{ab}\Delta r-\nabla_a\nabla_br)\big)\notag\\
&+(2n-2)r\IP{\nabla r}{\nabla \omega}^2 + (n-2)r|\nabla r|^2|\nabla \omega|^2\notag\\
&+r^2F(*^2\nabla \omega, \nabla \omega * \nabla^2 \omega, *^3\nabla \omega ,*^4\nabla \omega)\notag\\
&+ r^3 F(*^4\nabla \omega, *^2\nabla\omega * \nabla^2 \omega)\notag.
\end{align}
Above for the $r^2$ and $r^3$ coefficients we have only recorded the
powers of $\omega$ and its derivatives that appear in the
expansion. In each case $F$ denotes a polynomial (and hence
smooth) function on its arguments. The informal notation $*$ indicates
a part of a tensor product and, for example, we have used the notation
$*^k V$ to denote some tensor part of $k$-fold tensor product of $V$.

We will now construct a formal solution to the
$H_{\bar{r}}^{\bar{g}}=0$ problem, as in Problem \eqref{minimal},
  by an inductive argument.

\begin{proof}[Proof of Proposition \ref{inductionminimal}]

All calculations are in a sufficiently small neighbourhood of the boundary that may change from step to step.
Let $\gp$ be the fixed AH metric considered as in the
Proposition statement. Let $g$ be a compactifying metric. That is $g$
is a metric up to the boundary, $r^2\gp=g$ on the
interior. The AH condition means that the boundary defining slice
function $r$ satisfies
\begin{equation}\label{ah}
|dr|^2_g=1+O(r).
\end{equation}

Set $r_0=r$ and $g_0=g$. We seek $r_1=e^{\omega}r_0$ for some function
$\omega$ smooth up to the boundary, such that $r_1^2\gp=g_1$
and that also $H^{g_1}_{r_1}=r_1F_1=\mathcal{O}(r_1)$ for some function
$F_1$ on $\ol{M}$ that is smooth up to the boundary. Note that, for any such $\om$, $r_1$ is again a non-negative defining function for the boundary.

We posit $\omega: =-r_0H_{r_0}=-rH_r$,

as then
\begin{align*}
r^c\nabla_c(-rH_r)=-|d r|_g^2 H_r-rr^c \nabla_c H_r= -H_r+\mathcal{O}(r)
\end{align*}
where all the above computations are done in the metric $g=g_0$, and
we have used equation (\ref{ah}).  So we have
\begin{align*}
r^c\nabla_c \omega=-H_r+\mathcal{O}(r).
\end{align*}

Using this choice of $\omega$  in equation
\eqref{exp-min-ver-expanded} (for the induction step at hand that is
$\bar{g}=g_1$ and $g=g_0$) we  verify that, on the right hand side, the
first two terms cancel up to  $\mathcal{O}(r_0)$ and the
other terms are manifestly  $O(r_0)$.

Thus we have
\begin{align*}
|d r_1|^3_{g_1} H_{r_1}e^{\omega}= \mathcal{O}(r_0)=r_0\bar{F}_1,
\end{align*}
for some function $\bar{F}_1$ that is smooth up to the boundary.

Now we use that, with $\om$ as here, $e^\om=1+O(r_0)=e^{-2\om}$ (from Taylor's Theorem), so
$r_1=e^{\omega}r_0=r_0+O(r_0^2)$, and we retain
asymptotic hyperbolicity of the $g_1$ metric, in that $|d
r_1|_{g_1}^2=1+\mathcal{O}(r_1)$ at the boundary (see \ref{needed} below).
So
$$
| d r_1|^3_{g_1} H_{r_1}e^{\omega}= \mathcal{O}(r_0)=r_0\bar{F}_1=r_1e^{-\omega}\bar{F}_1,$$
whence
$| d r_1|^3_{g_1} H_{r_1}= r_1 e^{-2\omega}\bar{F}_1=  r_1 \tilde{F}_1$, where $\tilde{F}_1$ is smooth to the boundary, and this gives
$(1+\mathcal{O}(r_1)) H_{r_1}= r_1 \tilde{F}_1$, so finally
$$
H_{r_1}=r_1 F_1.
$$
as required.

For the $k$th induction step, assume that we have
$H_{r_k}=\mathcal{O}(r_k^k)=r_k^kF_k$ for some smooth function $F_k$
up to the boundary. We seek an $\omega_k$, that we will call $\omega$
here (abusing notation) to simplify the exposition, such that with
$r_{k+1}=e^\omega r_k$, and hence $g_{k+1}=e^{2\omega}g_k$,
we have
$H_{r_{k+1}}=\mathcal{O}(r_{k+1}^{k+1})=r_{k+1}^{k+1}F_{k+1}$, for some
smooth function $F_{k+1}$ up to the boundary.

 Generalising the ansatz for $k=0$, we posit
$\omega=-\frac{r_kH_{r_k}}{k+1}=-\frac{r_{k}^{k+1}F_k}{k+1}$ so that
 it solves

\begin{align}\label{cancel}
r_k^c\nabla_c \omega=-H_{r_k}+\mathcal{O}(r_{k}^{k+1}).
\end{align}
Note that this determines $F_k$ up to $+O(r_k)$, and hence we come to the uniqueness statement of the Proposition.
Let us check that our choice of $\omega$ indeed satisfies the above.
\begin{align*}
  r_k^c\nabla_c\big(-\frac{r_k^{k+1}F_{k}}{k+1}\big)
  =-|d r_k|_{g_k}^2 r^k_k F_k+\mathcal{O}(r_k^{k+1}) = -H_{r_k}+\mathcal{O}(r_k^{k+1})
\end{align*}
where we have used  that $|d r_k|_{g_k}^2=1+\mathcal{O}(r_k)$.

Let us now check that if the $g_k$ is asymptotic hyperbolic with the
conformal factor $r_k$ then also the $g_{k+1}$ has the same property
for the choice of $r_{k+1}=e^{\omega}r_k$.
We have
\begin{align*}
  d r_{k+1}&= d\big(e^{\omega}r_k\big)= e^\omega r_kd\omega +e^\omega d r_k,
\end{align*}
So
\begin{align*}
|dr_{k+1}|_{g_{k+1}}^2&=| e^\omega r_kd\omega+e^\omega d r_k|_{g_{k+1}}^2=|d\omega e^\omega r_k+e^\omega d r_k|_{g_k}^2 e^{-2\omega}\\
&=|  r_kd\omega+ d r_k|_{g_k}^2\\
&=|d\omega|_{g_k}^2  r^2_k+2(r^c\omega_c)_{g_k}  r_k + |d r_k|_{g_k}^2\\
&=\mathcal{O}(r_k^{2k+2})+\mathcal{O}(r_k^{k+1})+1+\mathcal{O}(r_k)\\
&=1+\mathcal{O}(r_k)\\
\end{align*}
So we have
\begin{equation}\label{needed}
|dr_{k+1}|_{g_{k+1}}^2= 1+\mathcal{O}(r_{k+1}).
  \end{equation}

With this choice of $\omega$ we return to our equation
\eqref{exp-min-ver-expanded} (for the current induction step -- so
$\bar{g}=g_{k+1}$ and $g=g_k$) to verify that on the right hand side
everything cancels or vanishes mod $+\mathcal{O}(r_k^{k+1})$.

We will look all the terms separately since some of them require some
manipulation.
First, from (\ref{cancel}) above we have
$$
(n-1)|dr_k|_{g_k}^3H_{r_k}+(n-1)|dr_k|_{g_k}^2 r_k^c\omega_c =\mathcal{O}(r^{k+1}_k),
$$
as the $|dr_k|_{g_k}^3$ and $|dr_k|_{g_k}^2$ are each $1+O(r_k)$,
and $H_{r_k}$ and $r_k^c\omega_c$ are each $O(r^k_k)$.
The term
$$
2r_k\omega^ar_k^b((g_k)_{ab}\Delta r_k-\nabla_a\nabla_br_k) =\mathcal{O}(r^{k+1}_k),
$$
is clear as $r_k\omega^a=O(r_k^{k+1})$.
Similarly we have,
\begin{align*}
(2n-2)r_k\IP{\nabla r_k}{\nabla \omega}_{g_k}^2 + (n-2)r|\nabla r_k|^2|\nabla \omega|^2&=\mathcal{O}(r^{2k+1}_k)=\mathcal{O}(r^{k+1}_k),\\
r_k^2F(*^2\nabla \omega, \nabla \omega * \nabla^2 \omega, *^3\nabla \omega ,*^4\nabla \omega)&=\mathcal{O}(r^{2k+1}_k)=\mathcal{O}(r^{k+1}_k),\\
r_k^3 F(*^4\nabla \omega, *^2\nabla\omega * \nabla^2 \omega)&=\mathcal{O}(r^{4k}_k)=\mathcal{O}(r^{k+1}_k).
\end{align*}
The remaining term is $r_k\big(|dr_k|_{g_k}^2\Delta \omega-
r_k^ar_k^b\nabla_a\nabla_b \omega\big)$. At first glance this term
looks like it could generate a problem, as it involves two derivatives
and only one $r_k$ multiplication. But the second order $r$
derivatives cancel out, leaving us with another
$\mathcal{O}(r^{k+1}_k)$ term:
\begin{align*}
r_k\big(|dr_k|_{g_k}^2\Delta \omega- r_k^ar_k^b\nabla_a\nabla_b \omega\big) &= r_k\big(g^{ab}_k-r_k^ar_k^b)\nabla_a \nabla_b\omega + \mathcal{O}(r^{k+1}_k)\\
&= r_k(k+1)k\big(g^{ab}_k-r_k^ar_k^b)(r_k)_a(r_k)_b \frac{(-r_k^{k-1})F_k}{k+1}+\mathcal{O}(r^{k+1}_k)\\
&= -kr_{k}^k|d r_k|_{g_k}^2\big(1- |d r_k|_{g_k}^2 \big)F_k + \mathcal{O}(r^{k+1}_k)\\
&=\mathcal{O}(r^{k+1}_k)
\end{align*}
where we have used (\ref{needed}) (or rather its $k$ version) in the first and last  equalities.

So we can  conclude from the above computations and
\eqref{exp-min-ver-expanded} that
\begin{align*}
  (n-1)| d r_{k+1}|_{g_{k+1}}^3 H_{r_{k+1}}e^{\omega}&= \mathcal{O}(r^{k+1}_k) = \mathcal{O}(r_{k+1}^{k+1})\end{align*}
whence
\begin{align*}
\big(1+\mathcal{O}(r_{k+1})\big)H_{r_{k+1}}&=\mathcal{O}(r_{k+1}^{k+1}),
\end{align*}
where we have used the relation $r_{k+1}=e^\omega r_k$, and again
(\ref{needed}). It follows that
$$
H_{r_{k+1}}=\mathcal{O}(r_{k+1}^{k+1})
$$
which completes the induction.

\end{proof}

We are now ready to treat, formally, the other obvious special case of Problem \ref{bigburrito}, namely when $G$ is the identity function -- so that the mean curvature of the slice function is itself the slice function.

\begin{proof}[Proof of Proposition \ref{inductionCMC}]
Let $\gp$ be the fixed AH metric considered in the statement of
the Proposition. Let $r$ be a non-negative slice function that defines the
boundary and $g$ the corresponding  compactifying metric up to
the boundary. Then
$\gp=\frac{g}{r^2}$, and $|dr|^2_g=1+O(r)$ since $\gp$ is AH.

As in the proof of Proposition \ref{inductionminimal}, set $r_0=r$ and
$g_0=g$. We seek a  function $\omega$,
smooth up to the boundary, so that with $r_1:=e^{\omega}r_0$   the corresponding metric $g_1$ on $\ol{M}$,
satisfying $r_1^2\gp=g_1$ (on $M$), gives
$$
H_{r_1}=r_1+\mathcal{O}(r_1).
$$
For this $k=0$ power of $r$ the
argument is identical to the case of the Proposition
\ref{inductionminimal}: Let $\omega=-r_0H_{r_0}$. Then, as there, we
obtain $H_{r_1}=\mathcal{O}(r_1)$. For our current purposes we rephrase this as
$H_{r_1}=r_1+\mathcal{O}(r_1)$.

The proof of the general inductive step requires more detail than was
needed in the proof of Proposition \ref{inductionminimal}.  For the $k$ to $k+1$
step, we assume that we have
$H_{r_k}=r_k+\mathcal{O}(r_k^k)=r_k+r_k^kF_k$ for some function $F_k$,
that is smooth up to the boundary. We now seek $\omega_k$ (that, as usual,
will denote $\omega$ to simplify the notation) such
that with  $r_{k+1}=e^\omega r_k$, and hence $g_{k+1}=e^{2\omega}g_k$,
we have
$H_{r_{k+1}}=r_{k+1}+\mathcal{O}(r_{k+1}^{k+1})=r_{k+1}+r_{k+1}^{k+1}F_{k+1}$
for some function $F_{k+1}$ that is smooth up to the boundary.

We posit $\omega=-\frac{r_{k}^{k+1}F_k}{k+1}$, and check that this
works.  First, note that, using that $g_k$ is asymptotic hyperbolic
with the conformal factor $r_k$, it follows that $g_{k+1}$ has the
same property (with the choice of $r_{k+1}=e^{\omega}r_k$). The proof
is identical with the case of the minimal induction step. See
\eqref{needed}.

Next, we will require the relation between the two
derivatives of the conformal functions.

With $\omega=-\frac{r_k^{k+1}F_k}{k+1}$, we have
$e^\om=1+r^{k+1}_k E_{k}$, where $E_{k}$ is a function smooth up
to the boundary, and so
\begin{align*}
  r_{k+1}&=e^\omega r_k=r_k(1+r^{k+1}_k {E}_k)=r_k+r^{k+2}_k{E}_k,
\end{align*}
and whence
\begin{align*}
  d r_{k+1}&=d r_k + (k+2)d r_k r^{k+1}_k {E}_k + r^{k+2}_k d {E}_k=dr_k+ O(r_k^{k+1}).
\end{align*}
This gives
\begin{align*}
  |d r_{k+1}|^2_{g_{k+1}}&=e^{-2\omega}|d r_k|_{g_k}^2+\mathcal{O}(r^{k+1}_k).
\end{align*}
And using $e^{-2\om}=1+O(r_k^{k+1})$ gives
\begin{align*}
  |d r_{k+1}|^2_{g_{k+1}}&=|dr_k|_{g_k}^2+\mathcal{O}(r^{k+1}_k),
\end{align*}
and so finally
\begin{align}\label{needed2}
\frac{|dr_k|_{g_k}^2}{|d r_{k+1}|^2_{g_{k+1}}}&=1+ \mathcal{O}(r^{k+1}_k).
\end{align}

With this choice of $\omega$, and these results, we now return to
equation \eqref{exp-min-ver-expanded} (for the induction step at hand
that is setting $\bar{g}:=g_{k+1}$ and $g:=g_k$) to verify that on the right
hand side everything vanishes, except
$r_{k+1}+\mathcal{O}(r_k^{k+1})$.

We first observe that
\begin{align*}
r_k^c \omega_c=-r_k^kF_k+r^{k+1}_k\tilde{F}_k,
\end{align*}
for some function $\tilde{F}_k$ that is smooth up to the boundary, and
can depend on $F_k$.  Now beginning with
\eqref{exp-min-ver-expanded}, we divide both sides by the $(n-1)|d
r_{k+1}|^3_{g_{k+1}}$ to yield
\begin{align*}
H_{r_{k+1}}e^{\omega}&=\frac{|dr_k|_{g_k}^3}{|d r_{k+1}|^3_{g_{k+1}}}H_{r_k}+\frac{|dr_k|_{g_k}^2}{|d r_{k+1}|^3_{g_{k+1}}} r_k^c\omega_c\\
&+\frac{1}{|d r_{k+1}|^3_{g_{k+1}}}r_k\mathcal{O}(r^{k}_k),
\end{align*}
where we have reduced the last four lines of the
\eqref{exp-min-ver-expanded} to $r_k \mathcal{O}(r_k^k)$ using that $\omega=\mathcal{O}(r^{k+1}_k)$ and an almost identical
computation as in the minimal case. Furthermore this last term in the
display here is overall of order $\mathcal{O}(r^{k+1}_k)$, as
$\frac{1}{|d r_{k+1}|^3_{g_{k+1}}}=1+\mathcal{O}(r_k)$.

We continue by substituting $H_{r_k}=r_k+r^k_kF_k$, also
$r_k^c \omega_c=-r_k^kF_k+r^{k+1}_k\tilde{F}_k$, as computed above,
$e^{\omega}=1+\mathcal{O}(r^{k+1}_k)$, and  use  that, from \eqref{needed2},
$\frac{|dr_k|_{g_k}^2}{|d r_{k+1}|^3_{g_{k+1}}}=1+r_kG_k$,
for
some function $G_k$ that is
smooth up to the boundary. We obtain
\begin{align*}
  H_{r_{k+1}}\big(1 &+  \mathcal{O}(r^{k+1}_k)\big)\\
  &=\frac{|dr_k|_{g_k}^3}{|d r_{k+1}|^3_{g_{k+1}}}\big(r_k+r_k^kF_k\big)+\big(1+r_kG_k\big)\big( -r_k^kF_k+r^{k+1}_k\tilde{F}_k\big)+\mathcal{O}(r^{k+1}_k),\\
&=\big(1+\mathcal{O}(r^{k+1}_k)\big)\big(r_k+r_k^kF_k\big)+\big(1+r_kG_k\big)\big( -r_k^kF_k+r^{k+1}_k\tilde{F}_k\big)
+\mathcal{O}(r^{k+1}_k),\\
&=r_k+\mathcal{O}(r^{k+1}_k)\\
&=r_{k+1}+\mathcal{O}(r_{k+1}^{k+1}),
\end{align*}
where we have used that \eqref{needed2} to the power of $3/2$ gives the leading behaviour claimed in the second line.

This completes our last step of the induction.

\end{proof}

\section*{acknowledgements}
Authors are grateful to Rafe Mazzeo, Robin Graham, Pedram Hekmati, Ben Andrews and Nicolau Sarquis Aiex for insightful discussions on the topic.
Both authors gratefully acknowledge support from the Royal Society of
New Zealand via Marsden Grants 16-UOA-051 and 19-UOA-008, and from the
Australian Research Council through DE190100379 and DP180100431.

\nocite{fefferman2012ambient}
\nocite{frauendiener2004conformal}
\nocite{gover2014boundary}
\nocite{bailey1994thomas}
\nocite{lawson1974foliations}
\nocite{frobenius1877ueber}
\nocite{vcap2016projective}
\nocite{vcap2016projective2}
\nocite{rummler1979quelques}

\bibliographystyle{plain}
\bibliography{mbib}

\end{document}